\newcommand{\comments}[1]{}
\newtheorem{theorem}{Theorem}[section]
\newtheorem{lemma}{Lemma}[section]
\newtheorem{corollary}{Corollary}[section]
\newtheorem{remarkk}{Remark}
\numberwithin{equation}{section}
\def \isnatural {\in\mathbb{N}}
\def \iscomplex {\in\mathbb{C}}
\def \spws {spiders' webs}
\def \classb {$\mathcal{B}$}
\newcommand{\tef}{transcendental entire function}
\newcommand\qfor{\quad\text{for }}
\newcommand\Real{\operatorname{Re}}
\newcommand\Imag{\operatorname{Im}}
\begin{document}
%
% OR THIS
%
\title[Functions for which the fast escaping set has dimension two.]{Functions of genus zero for which the fast escaping set has Hausdorff dimension two.}
\author{D. J. Sixsmith}
\address{Department of Mathematics and Statistics \\
	 The Open University \\
   Walton Hall\\
   Milton Keynes MK7 6AA\\
   UK}
\email{david.sixsmith@open.ac.uk}
%%%%%%%%%%%%%
%
% ABSTRACT
%
%%%%%%%%%%%%%
\begin{abstract}
We study a family of {\tef}s of genus zero, for which all of the zeros lie within a closed sector strictly smaller than a half-plane. In general these functions lie outside the Eremenko-Lyubich class. We show that for functions in this family the fast escaping set has Hausdorff dimension equal to two.
\end{abstract}
\maketitle
%
%%%%%%%%%%%%%
%
% INTRO
%
%%%%%%%%%%%%%
\let\thefootnote\relax\footnote{2010 \itshape Mathematics Subject Classification. \normalfont Primary 37F10; Secondary 30D05.}
\let\thefootnote\relax\footnote{The author was supported by Engineering and Physical Sciences Research Council grant EP/J022160/1.}
\section{Introduction}
Suppose that $f:\mathbb{C}\rightarrow\mathbb{C}$ is a {\tef}. The \itshape Fatou set \normalfont $F(f)$ is defined as the set of points $z\iscomplex$ such that $(f^n)_{n\isnatural}$ is a normal family in a neighbourhood of $z$. Since $F(f)$ is open, it consists of at most countably many connected components which are called \itshape Fatou components\normalfont. The \itshape Julia set \normalfont $J(f)$ is the complement in $\mathbb{C}$ of $F(f)$. An introduction to the properties of these sets was given in \cite{MR1216719}.

For a general {\tef} the \itshape escaping set \normalfont $$I(f) = \{z : f^n(z)\rightarrow\infty\text{ as }n\rightarrow\infty\}$$ was studied in \cite{MR1102727}. The set $I(f)$ now plays a key role in the study of complex dynamics, particularly with regard to the major open question, first asked in \cite{MR1102727}, of whether $I(f)$ necessarily has no bounded components. This question is known as \itshape Eremenko's conjecture\normalfont.

The \itshape fast escaping set\normalfont, $A(f) \subset I(f)$, was introduced in \cite{MR1684251}, and was defined in \cite{Rippon01102012} by
\begin{equation*}
A(f) = \{z : \text{there exists } \ell \isnatural \text{ such that } |f^{n+\ell}(z)| \geq M^n(R,f), \text{ for } n \isnatural\}.
\end{equation*}
Here, the \itshape maximum modulus function \normalfont $M(r,f) = \max_{|z|=r} |f(z)|,$ for $r \geq 0,$ $M^n(r,f)$ denotes repeated iteration of $M(r,f)$ with respect to the variable $r$, and $R > 0$ is such that $M(r,f) > r$, for $r \geq R$. The set $A(f)$ also now plays a key role in the study of complex dynamics. One reason for this is that $A(f)$ has no bounded components \cite[Theorem 1]{MR2117213}, and this provides a partial answer to Eremenko's conjecture. We refer to \cite{Rippon01102012} for a detailed account of the properties of this set.

%
% EL and the B K result
%
The Eremenko-Lyubich class, {\classb}, is defined as the class of {\tef}s for which the set of singular values is bounded. Most existing results on the size of the Julia set of a {\tef}, $f$, concern the case that $f\in\text{\classb}$. A key paper which concerns the size of the Julia set of a {\tef} which may be outside the class {\classb} is that of Bergweiler and Karpi{\'n}ska \cite{MR2609307}. Their principle result is as follows. For $E \subset \mathbb{C}$, we let $\dim_H E$ denote the Hausdorff dimension of $E$, and refer to \cite{falconer} for a definition of Hausdorff dimension.
\begin{theorem}
\label{BKth}
Suppose that $f$ is a {\tef} and that there exist $A, B, C, r_1 > 1$ such that 
\begin{equation}
\label{BKeq}
A \log M(r, f) \leq \log M(Cr, f) \leq B \log M(r, f), \qfor r\geq r_1.
\end{equation}
Then $\dim_H J(f) \cap I(f) = 2$.
\end{theorem}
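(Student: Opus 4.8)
The plan is to establish the lower bound $\dim_H\bigl(J(f)\cap I(f)\bigr)\ge 2$; the reverse inequality is immediate since $J(f)\cap I(f)\subset\mathbb{C}$. It suffices to construct, for every $\varepsilon>0$, a compact subset of $J(f)\cap I(f)$ — in fact of $A(f)\cap J(f)$ — of Hausdorff dimension at least $2-\varepsilon$. I would build this as a Cantor-type set $E=\bigcap_{n\ge0}E_n$, where each $E_n$ is a finite disjoint union of topological disks (``pieces''), each component of $E_n$ containing many components of $E_{n+1}$, while fixing a rapidly increasing sequence of radii $(r_n)$ with $r_{n+1}$ comparable to $M(r_n,f)$. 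The pieces of $E_n$ are produced by pulling back, through branches of $f^{-n}$ of bounded distortion, round disks lying at scale $r_n$; any point $z\in E$ then satisfies $|f^n(z)|\asymp r_n$ for all $n$, which, since $r_{n+1}\asymp M(r_n,f)$, forces the orbit of $z$ to escape at least as fast as $n\mapsto M^n(R,f)$, so that $z\in A(f)\subset I(f)$.

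The analytic core is to show that \eqref{BKeq} forces $f$, restricted to suitable subsets of $\{\,|z|\asymp r\,\}$ for large $r$, to behave like a covering of high degree over a large annulus, with inverse branches of uniformly bounded Koebe distortion. Concretely one works inside the tracts over $\{|w|>\mu\}$ (the components of $\{|f|>\mu\}$, for a large fixed $\mu$), estimates their number and their angular width near $|z|=r$ in terms of $\log M(r,f)$, and then, given a piece $W$ of $E_n$ (a disk at scale $r_n$ on which the relevant branch of $f^{-n}$ is already defined with controlled distortion), selects many disjoint round disks $W'$ at scale $r_{n+1}$ sitting well inside $f(W)$ and pulls each back into $W$. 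The two-sided condition \eqref{BKeq} enters decisively: the lower bound $A\log M(r,f)\le\log M(Cr,f)$ guarantees that enough preimage disks appear, occupying a definite fraction $\ge 1-\delta(\varepsilon)$ of the area of $W$, so that the relative density does not degenerate with $n$; while the upper bound $\log M(Cr,f)\le B\log M(r,f)$ prevents the tracts from becoming too thin and keeps the distortion of each single branch of $f^{-1}$, hence (inductively, via the Koebe distortion theorem) of $f^{-n}$, uniformly bounded, so that all the metric estimates can be transported back to level $0$.

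To pass from this metric data to a dimension bound I would invoke a mass-distribution estimate in the style of McMullen: if $E=\bigcap_nE_n$ is a nested family of this type, with relative density of $E_{n+1}$ in each component of $E_n$ at least $d_{n+1}$ and with $\Delta_n:=\max\{\operatorname{diam}V:\ V\text{ a component of }E_n\}\to0$, then
\[
\dim_H E\ \ge\ 2-\limsup_{n\to\infty}\frac{\log\bigl(1/(d_1d_2\cdots d_n)\bigr)}{\log(1/\Delta_n)}.
\]
With the parameters chosen so that $d_n\ge 1-\delta(\varepsilon)$ for all $n$ while $\log(1/\Delta_n)\to\infty$ at a controlled rate, the $\limsup$ is at most $C\,\delta(\varepsilon)$, so $\dim_H E\ge 2-C\,\delta(\varepsilon)$; letting $\varepsilon\to0$ gives the theorem. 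That $E\subset J(f)$ follows from a normality argument: the images $f^n(V)$ of the level-$n$ pieces all contain a fixed annulus $\{\mu<|w|<2\mu\}$, so $(f^n)$ cannot be normal on any neighbourhood of a point of $E$.

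The step I expect to be the main obstacle is the analytic one, precisely because $f$ need not lie in the Eremenko–Lyubich class: the tracts over $\{|w|>\mu\}$ may contain critical points, so the branches of $f^{-1}$ one wants are not automatically univalent and the clean covering picture available when $f\in\mathcal{B}$ breaks down. One must either show, using \eqref{BKeq} together with Jensen's formula to bound the number of critical values of $f$ up to a given modulus, that critical points are sparse enough that a large proportion of the candidate branches survive as univalent maps of controlled distortion, or else pull back slightly shrunken disks chosen to avoid the critical values. Making the resulting density bound $d_n\ge 1-\delta(\varepsilon)$ genuinely quantitative — sharp enough that the regularity encoded in \eqref{BKeq} actually drives the exponent down to $2-\varepsilon$ — is where the hypothesis has to be used most carefully.
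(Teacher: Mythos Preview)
This theorem is not proved in the paper: it is quoted as the main result of Bergweiler and Karpi\'nska \cite{MR2609307}, and the paper only adapts their method to a different family of functions. So there is no ``paper's own proof'' to compare against directly; what can be compared is your sketch against the Bergweiler--Karpi\'nska strategy, which the present paper follows closely in its proof of Theorem~\ref{maintheo}.

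Your overall architecture --- a McMullen-type Cantor construction with density and diameter estimates, yielding a set in $A(f)\cap J(f)$ --- is correct and is exactly what is done. The substantive difference is in the analytic engine. You propose to work with tracts over $\{|w|>\mu\}$, count them, control their angular width, and then fight the obstruction that $f$ may have critical points in the tracts by invoking Jensen's formula to bound critical values. This is the class-$\mathcal{B}$ reflex, and as you yourself flag, it is exactly where the argument would stall for general $f$. Bergweiler and Karpi\'nska avoid tract geometry entirely. The regularity hypothesis (\ref{BKeq}) is used to show that on a large set of annuli the \emph{logarithmic derivative} $zf'(z)/f(z)$ is large and varies by at most a bounded factor on small discs; one then applies the Ahlfors five-islands theorem to the function $z\mapsto\log f(z)-\log f(\beta)$ on each such disc to produce, with no hypothesis on critical values, a subdomain mapped univalently by $\log f$ onto a fixed rectangle (compare Lemma~\ref{constlemma} and the proof of Lemma~\ref{domainslemma} here). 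This is the key idea you are missing: Ahlfors islands gives univalent inverse branches for free once the spherical derivative of $\log f$ is large, sidestepping the whole critical-value bookkeeping.

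A second, smaller point: you aim for densities $d_n\ge 1-\delta(\varepsilon)$ and then let $\varepsilon\to 0$. In fact a single construction suffices. The densities are only bounded below by a fixed constant $c_3>0$, but the diameters $d_n$ shrink so fast (essentially like $1/\log M^n(R,f)$, so $|\log d_n|/n\to\infty$; compare Lemma~\ref{Lstrangecondition} and the end of Section~\ref{buildsets}) that the $\limsup$ in McMullen's lemma is zero outright, giving $\dim_H E=2$ without any limiting argument.
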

The techniques used to prove Theorem~\ref{BKth} are very different to those used in earlier papers on dimension. Roughly speaking, the key idea is to construct a set in which the logarithmic derivative is large, and approximately constant in certain small discs. Bergweiler and Karpi{\'n}ska showed that the existence of this set follows from (\ref{BKeq}), which imposes a very strong regularity condition on the growth of the function. 

The aim of this paper is to show that there is a large family of functions, which need not satisfy this regularity condition, for which a set with somewhat similar properties may be constructed. We then show that for functions in this family the \itshape fast \normalfont escaping set has Hausdorff dimension equal to $2$.
 
%
% Or result
%
In particular, we study a family of {\tef}s of \itshape genus \normalfont zero, for which all of the zeros lie within a closed sector strictly smaller than a half-plane; we refer to \cite[p.196]{MR0188405} for the definition of genus. In general these functions lie outside the class {\classb}. 

Our main result is as follows. Here, and throughout the paper, we denote by $\arg(z)$ the principle argument of $z$, for $z \ne 0$; in other words $\arg(z)\in(-\pi,\pi]$.
\begin{theorem}
\label{maintheo}
Suppose that $f$ is a {\tef} of the form
\begin{equation}
\label{fdef}
f(z) = c z^q \prod_{n=1}^\infty \left(1 + \frac{z}{a_n}\right), \text{ where } c\ne0, \ q\in\{0,1,\cdots\}, \text{ and } 0<|a_1|\leq |a_2|\leq\cdots. 
\end{equation}
%where $c\ne0, \ q\in\{0,1,\cdots\}$, and $0<|a_1|\leq |a_2|\leq\cdots$. 
Suppose that there exist positive constants $\theta_1$ and $\theta_2$, such that $0 \leq \theta_2 - \theta_1 < \pi$, and also $N_0\isnatural$ such that
%
%$\theta \in [0, \pi/2), \ \phi \in (-\pi+\theta, \pi-\theta]$ and $N_0\isnatural$ such that 
\begin{equation}
\label{angleconditiona}
\arg(a_n) \in [\theta_1, \theta_2] \mod 2\pi , \qfor n\geq N_0.
%\arg(a_n) \in [\phi - \theta, \phi + \theta], \qfor n\geq N_0.
\end{equation}
Suppose also that $f$ has no multiply connected Fatou components. Then, $$\dim_H J(f)~\cap~A(f) = 2.$$
\end{theorem}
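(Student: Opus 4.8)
\section*{Proof proposal}

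The plan is to follow the global strategy that Bergweiler and Karpi\'nska use for Theorem~\ref{BKth}, replacing the regularity hypothesis (\ref{BKeq}) by structural facts coming from the genus-zero assumption together with the sector condition (\ref{angleconditiona}). Two reductions come first. Since $\dim_H(J(f)\cap A(f))\le 2$ always, and since a Fatou component meeting $A(f)$ must be multiply connected (see \cite{Rippon01102012}), the hypothesis that $f$ has no multiply connected Fatou components gives $A(f)\subseteq J(f)$; hence it is enough to produce, for each large $R$, a Cantor-type set $E_R\subseteq A(f)$ with $\dim_H E_R\ge 2-\varepsilon(R)$ and $\varepsilon(R)\to0$ as $R\to\infty$, so that $\dim_H(J(f)\cap A(f))\ge\sup_R\dim_H E_R=2$.

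Next I would assemble the function-theoretic input. Write $S(r)=\sum_n\log(1+r/|a_n|)$, so that $\log M(r,f)\le\log|c|+q\log r+S(r)$. A short real-part estimate along the bisecting ray $\arg z=\theta_0:=(\theta_1+\theta_2)/2$, using $\log|1+z/a_n|\ge\log(1+\Re(z/a_n))\ge\cos\bigl(\tfrac{\theta_2-\theta_1}{2}\bigr)\log(1+r/|a_n|)$ on that ray (and that only finitely many $a_n$ fail (\ref{angleconditiona})), gives $\log|f(re^{i\theta_0})|\ge\kappa S(r)-O(\log r)$ for some $\kappa=\kappa(\theta_2-\theta_1)\in(0,1]$; combined with the upper bound this yields $\log M(r,f)\asymp S(r)$ for large $r$. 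The elementary inequality $\log(1+2x)\le 2\log(1+x)$ then gives $S(2r)\le 2S(r)$, hence $\log M(2r,f)\le B\log M(r,f)$: the \emph{upper} half of (\ref{BKeq}) holds, whereas the \emph{lower} half ($A>1$) may genuinely fail for functions in this family, which is exactly why Theorem~\ref{BKth} does not apply. Because $\theta\mapsto\log|1+re^{i\theta}/a_n|$ increases as $\theta$ moves towards $\arg a_n$, the map $\theta\mapsto\log|f(re^{i\theta})|$ is monotone on each arc complementary to $[\theta_1,\theta_2]$, so for $r$ large $M(r,f)$ is attained at a point $z^*(r)$ with $\arg z^*(r)\in[\theta_1,\theta_2]$; moreover $\lambda(z^*(r))=z^*(r)f'(z^*(r))/f(z^*(r))$ is real, positive and tends to $\infty$ (it equals $\tfrac{d}{d\log r}\log M(r,f)$, which is nondecreasing and unbounded by convexity). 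Finally, on a narrow sector $S_0$ about the ray $\arg z=\theta_0$ one checks that $\lambda(z)=q+\sum_n z/(z+a_n)$ satisfies $|\lambda(z)|\asymp\Re\lambda(z)\asymp\sum_n r/(r+|a_n|)\to\infty$ and $|z\lambda'(z)|\lesssim|\lambda(z)|$, so $\log f$ is univalent with bounded distortion on every disc of fixed logarithmic radius $s_0$ in $S_0$, and the image of such a disc covers a round annulus of modular ratio tending to $\infty$; a second-order expansion about $z^*(r)$ (using $\Im\lambda(z^*(r))=0$ and the bound on $z\lambda'$) shows in addition that $|f(z)|\ge M(|z|,f)^{1-\varepsilon}$ on a ``near-maximal tube'' of fixed angular width surrounding the curve $r\mapsto z^*(r)$.

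With these in hand the construction is a tree of nested discs $D_{\mathbf i}$, of logarithmic radius $\le s_0$, each contained in a suitable sector and carrying $f$ univalently with bounded distortion. Passing to the coordinates $\zeta=\log z$, $w=\log f(z)$, on $D_{\mathbf i}$ the map $f$ is close to the affine map $\zeta\mapsto\lambda(z_{\mathbf i})(\zeta-\zeta_{\mathbf i})+w_{\mathbf i}$ of expansion factor $|\lambda(z_{\mathbf i})|$, so the image of $D_{\mathbf i}$ is a square of side $\asymp|\lambda(z_{\mathbf i})|s_0$; since this side exceeds $2\pi$ the image wraps the sector many times, and inside it one can choose $N_{\mathbf i}^2$ pairwise disjoint child discs of logarithmic radius $s_0$, with $N_{\mathbf i}\asymp|\lambda(z_{\mathbf i})|\to\infty$. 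The children are placed at moduli so that the level-$n$ disc has modulus at least $M^{n-\ell}(R,f)$ for a fixed $\ell$, so that $E_R:=\bigcap_n\bigcup_{\mathbf i}\overline{D_{\mathbf i}}\subseteq A(f)$. By Koebe distortion the inverse branches have uniformly bounded distortion, so the level-$n$ discs have comparable diameters $\asymp s_0\prod_{k<n}N_k^{-1}$ and number $\prod_{k<n}N_k^2$; a standard mass-distribution argument then gives
\[
\dim_H E_R=\liminf_{n\to\infty}\frac{\sum_{k<n}\log N_k^2}{\sum_{k<n}\log N_k+O(1)}=2,
\]
because $N_k\to\infty$, so $\sum_k\log N_k=\infty$.

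The step I expect to be the main obstacle is the simultaneous control of branching and of modulus growth, i.e.\ keeping $E_R\subseteq A(f)$. Off the near-maximal curve one controls $|f(z)|$ only by a power $M(|z|,f)^{\kappa}$ with $\kappa<1$, and since the lower half of (\ref{BKeq}) can fail — $\log M(r,f)$ may grow so slowly that $\log M(r,f)/\log M(r/C,f)\to1$ for every fixed $C$ — such a power loss at each iterate need \emph{not} be absorbable into a bounded shift $\ell$ in the definition of $A(f)$. The remedy is that the near-maximal tube has \emph{fixed} angular width, hence is genuinely two-dimensional and can absorb a full $N_{\mathbf i}^2$-fold branching; so the construction must be kept inside successive forward images of this tube, where $|f(z)|\ge M(|z|,f)^{1-\varepsilon}$, and the upper regularity $\log M(2r,f)\le B\log M(r,f)$ must be used to turn the remaining $(1-\varepsilon)$-losses into a bounded index shift. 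Making this uniform down the whole tree — reconciling the tube width, the admissible $\varepsilon$, the size of $\ell$, and the bounded-distortion estimates, especially when the zero-sector is close to a half-plane and the logarithmic-derivative estimates near $z^*(r)$ become delicate — is the technical heart of the argument.
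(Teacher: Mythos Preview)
Your overall architecture---a McMullen-type Cantor construction in logarithmic coordinates, fed by estimates on $\lambda(z)=zf'(z)/f(z)$ in a sector---matches the paper's. But there is one genuine gap and one place where you make life harder than necessary.

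The gap is your opening reduction. It is \emph{not} true that a Fatou component meeting $A(f)$ must be multiply connected; the paper's own Remark~1 cites an example in exactly this family with $A(f)\cap F(f)\ne\emptyset$ and no multiply connected Fatou components. So you cannot conclude $A(f)\subseteq J(f)$, and the inclusion $E\subset J(f)$ has to be argued directly. The paper does this via Lemma~\ref{LinJulia}: if $z\in I(f)$ and $|z_nf'(z_n)/f(z_n)|\ge\lambda>1$ eventually along the orbit, then $z\in J(f)$ or $z$ lies in a multiply connected Fatou component. Since you already have $|\lambda(z)|\to\infty$ on the sector, this costs nothing extra once stated---but the reduction as you wrote it is false.

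The ``main obstacle'' you flag---absorbing the loss $|f(z)|\ge M(|z|,f)^{\kappa}$ into a bounded index shift when the lower half of (\ref{BKeq}) fails---is real if you insist on comparing with $M(|z|,f)$, but the paper sidesteps it entirely. A direct term-by-term comparison (equation (\ref{comparabletoM})) gives the \emph{full-strength} bound $|f(z)|\ge M(\sigma|z|,f)=:\mu(|z|)$ for $z$ in the sector, with $\sigma=\cos\psi'<1$ fixed; iterating yields $|f^n(z)|\ge\mu^n(|z|)$. The key observation is then that $A(f)$ has an equivalent definition with $\mu$ in place of $M$ (Lemma~\ref{otherA}), so no $(1-\varepsilon)$-losses, no near-maximal tube, and no upper-regularity bookkeeping are needed. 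The whole construction stays in the fixed annular sectors $T(r)=\{r\le|z|\le 2r,\ |\arg z|\le\psi-\theta_2\}$, with the Ahlfors five-islands theorem applied to $\log f$ supplying the branching from the lower bound $|\lambda(z)|\gtrsim\log M(r,f)/\log r$.
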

Functions for which (\ref{BKeq}) is satisfied have no multiply connected Fatou components \cite[Theorem 4.5]{MR2609307}. It is well-known that there are functions of the form (\ref{fdef}), such that (\ref{angleconditiona}) is satisfied and which have a multiply connected Fatou component. However, if $U$ is a multiply connected Fatou component of $f$, then $\overline{U}~\subset~A(f)$; see \cite[Theorem 2]{MR2117213} and \cite[Theorem 4.4]{Rippon01102012}. We deduce the following corollary of Theorem~\ref{maintheo}.
\begin{corollary}
Suppose that $f$ is a {\tef} of the form (\ref{fdef}), such that (\ref{angleconditiona}) is satisfied. Then $\dim_H A(f) = 2$.
\end{corollary}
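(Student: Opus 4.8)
The plan is to deduce the Corollary from Theorem~\ref{maintheo} by disposing of exactly the hypothesis that $f$ has no multiply connected Fatou components. There are two cases. If $f$ has no multiply connected Fatou component, then Theorem~\ref{maintheo} applies directly and gives $\dim_H J(f)\cap A(f)=2$; since $J(f)\cap A(f)\subset A(f)$ and Hausdorff dimension is monotone, $\dim_H A(f)=2$ and we are done. So the real content is the complementary case, where $f$ \emph{does} have at least one multiply connected Fatou component $U$.

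In that case I would invoke the two facts already recalled in the excerpt: $\overline{U}\subset A(f)$ (by \cite[Theorem 2]{MR2117213} and \cite[Theorem 4.4]{Rippon01102012}), and $\partial U\subset J(f)$ (the boundary of any Fatou component lies in the Julia set). Hence $\partial U\subset J(f)\cap A(f)\subset A(f)$. It therefore suffices to show that $\dim_H\partial U=2$, or more modestly that $\overline{U}$, and hence $A(f)$, contains a set of Hausdorff dimension $2$. This is where I expect the one nontrivial ingredient to enter: one needs a known structural result that the boundary of a multiply connected Fatou component of a {\tef} has Hausdorff dimension two. Such a statement is available in the literature (it follows from work of Bergweiler, Rippon and Stallard on the structure of multiply connected Fatou components, where $\partial U$ is shown to contain continua winding around the origin on an infinite sequence of scales, forcing full dimension); I would cite this and apply it to our $U$. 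Monotonicity of $\dim_H$ then gives $\dim_H A(f)\geq\dim_H\partial U=2$, and since $A(f)\subset\mathbb{C}$ forces $\dim_H A(f)\leq 2$, we conclude $\dim_H A(f)=2$.

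The main obstacle, such as it is, is not a computation but a bookkeeping point: making sure the dimension-two statement for $\partial U$ is quoted in a form that applies to every {\tef} with a multiply connected Fatou component, with no extra hypotheses beyond those in \eqref{fdef} and \eqref{angleconditiona}. If one preferred to avoid relying on that external result, an alternative route would be to note that the constructions underlying Theorem~\ref{maintheo} produce, near $J(f)\cap A(f)$, sets of dimension arbitrarily close to $2$ whose relevant estimates do not actually use the absence of multiply connected components — but since the hypotheses of Theorem~\ref{maintheo} are used elsewhere precisely to rule those components out, the cleaner argument is the case split above. Either way the proof is short: the substantive work is all in Theorem~\ref{maintheo}, and the Corollary merely patches the remaining case using that multiply connected Fatou components, when present, are themselves fast escaping and have boundaries of full dimension.
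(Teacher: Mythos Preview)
Your case split matches the paper's approach exactly, and Case~1 is fine. In Case~2, however, you have made the argument harder than it needs to be. Once you know $\overline{U}\subset A(f)$ for a multiply connected Fatou component $U$, you are already done: $U$ itself is a nonempty open subset of $\mathbb{C}$, so trivially $\dim_H U = 2$, and $U\subset\overline{U}\subset A(f)$ gives $\dim_H A(f)=2$ by monotonicity. There is no need to pass to $\partial U$, and no need to invoke any structural result about boundaries of multiply connected Fatou components. The paper's deduction is precisely this one-line observation.

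Your route via $\dim_H\partial U=2$ is not wrong---that statement is indeed known---but it imports a substantial external theorem to do a job that the containment $U\subset A(f)$ already handles for free. You even flag the simpler target yourself (``or more modestly that $\overline{U}$ \ldots\ contains a set of Hausdorff dimension $2$''), but then overlook that $U$ is such a set.
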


\begin{remarkk}\normalfont
An example of a {\tef}, $f$, of the form (\ref{fdef}), such that (\ref{angleconditiona}) is satisfied, $A(f) \cap F(f) \ne \emptyset$ and $f$ has no multiply connected Fatou components was given in \cite{MR2959920}. This function is outside the class {\classb}. 
\end{remarkk}
\begin{remarkk}\normalfont
%There are functions in the class {\classb} of the form (\ref{fdef}), and such that (\ref{angleconditiona}) is satisfied. 
If $f \in \mathcal{B}$, then $f$ does not have a multiply connected Fatou component \cite[Proposition 3]{MR1196102}. It follows that if $f\in\mathcal{B}$ is of the form (\ref{fdef}), and such that (\ref{angleconditiona}) is satisfied, then $\dim_H J(f) \cap A(f) = 2.$ An example of such a function \cite[p.75]{standards} is $$\cos \sqrt{z} = \prod_{n=1}^\infty \left(1 - \frac{z}{\pi^2 (n-\frac{1}{2})^2}\right).$$ 
\end{remarkk}

%
% R and S
%
\begin{remarkk}\normalfont
Rippon and Stallard \cite{negativerealzeros} also studied a family of functions of genus zero. In particular, they studied the dynamics of {\tef}s of the form
\begin{equation*}
f(z) = c z^q \prod_{n=1}^\infty \left(1 + \frac{z}{\alpha_n}\right), \text{ where } c\in\mathbb{R}\backslash\{0\}, \ q\in\{0,1,\cdots\}, \ \text{and } 0<\alpha_1\leq \alpha_2\leq\cdots.
\end{equation*}
It follows from Theorem~\ref{maintheo}, together with results in \cite{negativerealzeros}, that if $f$ is a function of this form, which also satisfies an additional condition on the minimum modulus of $f$, then $J(f)$ and $J(f) \cap I(f)$ are {\spws} of Hausdorff dimension $2$; we refer to \cite{negativerealzeros} for background and definitions.
\end{remarkk}

%
% structure
%
The structure of this paper is as follows. First, in Section~\ref{Sdefs}, we give a number of definitions which are used throughout the paper. In Section~\ref{Sover}, we give a sequence of three new lemmas which are required for the proof of Theorem~\ref{maintheo}. In Section~\ref{Slast} we give the proof of Theorem~\ref{maintheo}. Finally, we prove the three new lemmas in Sections~\ref{prooflemma1}, \ref{Sdomains} and \ref{buildsets} respectively.%  Finally, in Section~\ref{Sspws}, we estimate the size of some {\spws}.
%
%
%%%%%%%%%%%%%
%
%%%%%%%%%%%%%
%
%
\section{Definitions and assumptions}
\label{Sdefs}
In this section we give a number of definitions and assumptions used in the proof of Theorem~\ref{maintheo}, which should be taken to be in place throughout the paper. 

First, composing with a rotation if necessary, we may assume that $\theta_2 \geq 0$ and that $\theta_1 = -\theta_2$. With these assumptions, we observe that $\theta_2 < \pi/2$, and that it follows from (\ref{angleconditiona}) that
\begin{equation}
\label{anglecondition}
|\arg(a_n)| \leq \theta_2, \qfor n\geq N_0.
\end{equation}

We choose values of $\psi$ and $\psi'$ such that $\theta_2 < \psi < \psi' < \pi/2,$ and set $\sigma= \cos \psi'$. We define a function
\begin{equation}
\label{mudef}
\mu(r) = M(\sigma r,f), \qfor r>0.
\end{equation}
Since it is well-known that 
\begin{equation}
\label{itgrows}
\frac{\log M(r,f)}{\log r} \rightarrow\infty \text{ as } r\rightarrow\infty,
\end{equation}
we may choose $r_0>0$ sufficiently large that
\begin{equation}
\label{mugrows}
\mu(r) > r, \qfor r\geq r_0.
\end{equation}

We next define a number of sets. First, define, for $r>0$, %$$S = \{ z : |\arg(z)| \leq \psi - \theta_2\},$$
$$S(r) = \{ z :  |\arg(z)| \leq (\psi - \theta_2), \ r \leq |z| \},$$ and $$T(r) = \{ z :  |\arg(z)| \leq (\psi - \theta_2), \ r \leq |z| \leq 2r \}.$$ 

As in \cite{MR2609307}, we define domains 
\begin{equation}
\label{thePk}
P_\kappa = \{z : |\Real (z)| < 1,|\Imag (z) - 8\pi \kappa| < 3\pi\}, \qfor \kappa \in \{1,2,3\}.
\end{equation}
For each $a$ such that $f(a) > 0$, and for $\kappa \in \{1,2,3\}$, we also define a domain
\begin{equation}
\label{theOmegak}
\Omega_\kappa(a) = \{z : |\Real(z) - \log f(a)| < 1, \ |\Imag (z) - 8\pi \kappa| < 3\pi\},
\end{equation}
and a compact subset of $\Omega_\kappa(a)$
\begin{equation}
\label{theQk}
Q_\kappa(a) = \{z : 0 \leq \Real(z) - \log f(a) \leq \log 2, \ |\Imag (z) - 8\pi \kappa| \leq \psi - \theta_2\}.
\end{equation}
We use the notation for a disc $$B(a, r) = \{ z: |z - a| < r \}, \qfor r > 0.$$ 
%
%
%%%%%%%%%%%%%
%
%
\section{Lemmas required for the proof of Theorem~\ref{maintheo}}
\label{Sover}
In this section we give three preliminary lemmas required for the proof of Theorem~\ref{maintheo}. These are proved in later sections. For ease of comparison we have, where possible, maintained a consistent terminology with that in \cite{MR2609307}.

We first show that, provided that $r>0$ is sufficiently large, we have very good control on the size of the modulus of $f$, and its logarithmic derivative, in $T(r)$.
\begin{lemma}
\label{sizelemma}
Suppose that $f$ is a {\tef} of the form (\ref{fdef}), such that (\ref{angleconditiona}) is satisfied, that $r_0$ is as defined prior to (\ref{mugrows}), and that $\mu$ is the function defined in (\ref{mudef}). Then there exists $r_1\geq r_0$ such that;
\begin{align}
\label{comparabletoM}
&|f(z)| \geq \mu(|z|), \qfor z\in S(r_1); \\
\label{logderivdunchangemuch}
&\left|z\frac{f'(z)}{f(z)}\right| \geq \frac{\sigma^4}{4} \left|w\frac{f'(w)}{f(w)}\right|, \qfor z, w \in T(r), \ r \geq r_1;\\
\label{logderivisnice}
&\left|r\frac{f'(r)}{f(r)}\right| \geq \frac{\sigma^2}{8} \frac{\log M(r,f)}{\log r} > 0, \qfor r \geq r_1;\\ 
\label{lastlogderiv}
&\left|f'(z)\right| \geq \frac{\sigma^6}{64} \frac{\log M(r,f)}{r \log r} |f(z)|, \qfor z \in T(r), \ r \geq r_1.
\end{align}
\end{lemma}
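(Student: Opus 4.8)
The plan is to exploit the fact that all but finitely many zeros $a_n$ lie in the sector $\{|\arg(a)|\le\theta_2\}$, so that for $z$ in the wider sector $S(r)$ and $r$ large, each factor $1+z/a_n$ is bounded away from zero in a controlled way, with argument close to that of $z/a_n$ staying away from $\pi$. I would begin by writing $\log|f(z)| = \log|c| + q\log|z| + \sum_{n}\log|1+z/a_n|$ and, separately, the logarithmic derivative $z f'(z)/f(z) = q + \sum_n \frac{z/a_n}{1+z/a_n}$. The key geometric observation is that if $|\arg(z)|\le\psi-\theta_2$ and $|\arg(a_n)|\le\theta_2$, then $|\arg(z/a_n)|\le\psi<\pi/2$, so $\Real(z/a_n) \ge \sigma' |z/a_n|$ for a suitable constant and, crucially, $|1+z/a_n|\ge\max(1,\cos\psi\cdot|z/a_n|) = \max(1, c_0|z/a_n|)$; this is precisely the role of $\sigma=\cos\psi'$ and the choice $\psi<\psi'$. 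The finitely many exceptional zeros $a_1,\dots,a_{N_0-1}$ contribute only a bounded multiplicative error once $|z|$ is large, which can be absorbed by taking $r_1$ large enough.

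For \eqref{comparabletoM}, I would compare $|f(z)|$ at a point $z\in S(r_1)$ with $M(\sigma|z|,f) = \max_{|w|=\sigma|z|}|f(w)|$. Writing the product over the good zeros and using $|1+z/a_n|\ge\cos\psi\,|1+ \zeta/a_n|$ for an appropriately chosen comparison point $\zeta$ on the circle $|\zeta| = \sigma|z|$ where the maximum of the relevant partial product is attained — more precisely, using that $|z+a_n|\ge\cos\psi\,(|z|+|a_n|)\ge\cos\psi\,||\zeta|+|a_n||\ge \cos\psi\,|\zeta+a_n|$ when $\zeta$ is chosen on the positive ray through the direction maximizing — gives $|f(z)|\ge(\text{const})\,M(\sigma|z|,f)$ up to the bounded error from exceptional factors and the $z^q$ term; choosing $r_1$ large makes the constant at least $1$ since $M(\sigma r,f)/r^{q+O(1)}\to\infty$. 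For \eqref{logderivisnice} I would use the standard Borel–Carathéodory-type / Poisson-type estimate relating $r f'(r)/f(r)$ to $\log M(r,f)$: since $f$ has nonnegative real zeros contribution... actually since each term $\frac{r/a_n}{1+r/a_n}$ has real part comparable to $|r/a_n|/(1+|r/a_n|)\ge\sigma^2\cdot\frac{1}{2}\cdot(\text{that term's contribution to }\log(M/\text{something}))$, summing and comparing with $\log M(r,f)/\log r = \frac{1}{\log r}\sum\log|1+r/a_n| + O(1)$ yields the stated bound with the explicit constant $\sigma^2/8$ after handling the exceptional terms.

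Estimates \eqref{logderivdunchangemuch} and \eqref{lastlogderiv} then follow by a near-constancy argument on the annular sector $T(r)$. For two points $z,w\in T(r)$ the ratios $|z/a_n|$ and $|w/a_n|$ differ by at most a factor of $4$ (since $r\le|z|,|w|\le 2r$), and the arguments $\arg(z/a_n),\arg(w/a_n)$ all lie in $(-\psi,\psi)$, so each summand $\frac{z/a_n}{1+z/a_n}$ is comparable to $\frac{w/a_n}{1+w/a_n}$ with a uniform constant; comparing the real parts (which are all positive, hence no cancellation occurs in the sum) gives $|z f'(z)/f(z)|\ge(\sigma^4/4)|w f'(w)/f(w)|$. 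Finally \eqref{lastlogderiv} is obtained by chaining: take $w=r$ in \eqref{logderivdunchangemuch}, apply \eqref{logderivisnice}, and divide through by $|z|\le 2r$, tracking the constant to get $\sigma^6/64$.

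I expect the main obstacle to be \eqref{comparabletoM}: controlling $|f(z)|$ from below by the maximum modulus on a smaller circle requires carefully choosing the comparison point so that the product inequality goes the right way for \emph{every} factor simultaneously, and then checking that the unavoidable loss (the factor $\cos\psi$ per factor is fatal if summed, so one must instead compare with $|\zeta+a_n|$ where $\zeta$ is real-positive of modulus $\sigma|z|$, using $|z+a_n|\ge\cos\psi'(|z|+|a_n|)\ge\cos\psi'\cdot|z|/\sigma\cdot\frac{\sigma|z|+|a_n|}{|z|} $ — i.e. exactly using $\sigma=\cos\psi'$ so that $|z+a_n|\ge|\,\sigma|z|+a_n\,|$ for $a_n$ in the sector, hence the product over good $n$ dominates $M(\sigma|z|,f)$ divided by the bounded exceptional and polynomial factors). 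Making this chain of inequalities precise, and verifying that the leftover polynomial factor $|z|^q$ versus $(\sigma|z|)^q$ and the finitely many exceptional zeros are harmless for $r_1$ large by \eqref{itgrows}, is the delicate part; the remaining three estimates are then comparatively routine consequences of the positivity of the real parts of the summands.
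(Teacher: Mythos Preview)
Your overall strategy matches the paper's proof closely: factor-by-factor comparison for \eqref{comparabletoM}, positivity of the real parts of the summands $z/(z+a_n)$ to control \eqref{logderivdunchangemuch}, a term-by-term comparison of $\Real\bigl(r/(r+a_n)\bigr)$ with $\log(1+r/|a_n|)/\log r$ for \eqref{logderivisnice}, and chaining for \eqref{lastlogderiv}. Two points deserve comment.

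For \eqref{comparabletoM} your displayed chain ``$|z+a_n|\ge\cos\psi'(|z|+|a_n|)\ge\cos\psi'\cdot|z|/\sigma\cdot\frac{\sigma|z|+|a_n|}{|z|}$'' is garbled and, as written, the last step is false (it would require $\sigma|a_n|\ge|a_n|$). The paper's version is cleaner and avoids this: for \emph{any} $w$ with $|w|=\sigma|z|$ and any good $n$ one has
\[
\Bigl|1+\tfrac{w}{a_n}\Bigr|\le 1+\Bigl|\tfrac{w}{a_n}\Bigr|=1+\sigma\Bigl|\tfrac{z}{a_n}\Bigr|\le 1+\Real\Bigl(\tfrac{z}{a_n}\Bigr)\le\Bigl|1+\tfrac{z}{a_n}\Bigr|,
\]
using only $\sigma|\zeta|\le\Real(\zeta)$ for $|\arg\zeta|\le\psi'$. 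This gives the product inequality for every factor simultaneously with no loss at all, so no absorption argument is needed except for the finitely many exceptional zeros, which the paper handles by the elementary bound $|z|/|a_n|-|w|/|a_n|\ge 2$ once $r_1\ge 2|a_{N_0-1}|/(1-\sigma)$.

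For \eqref{logderivisnice} you are vague about \emph{how} $r/(r+|a_n|)$ is compared with the contribution of $a_n$ to $\log M(r,f)/\log r$. The paper isolates this as the elementary inequality
\[
\frac{r}{r+|a_n|}\ge\frac14\,\frac{\log(1+r/|a_n|)}{\log r},
\]
proved by splitting into the cases $2|a_n|<r$ and $2|a_n|\ge r$; this is the step that produces the factor $1/8$ in the constant. Without stating something equivalent to this, the passage from the sum of $r/(r+|a_n|)$ to $\log M(r,f)/\log r$ is not justified.
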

The proof of this lemma is given in Section~\ref{prooflemma1}. We note in these equations a significant difference between the properties our family of functions and the properties of the family considered in \cite{MR2609307}. In \cite{MR2609307}, a set is constructed in which $|f(z)|$ is greater than a fixed power of $|z|$, and $|z f'(z)/f(z)|$ is comparable to $\log M(|z|,f)$. \\

We next show that, for large values of $r>0$, we have that $T(r)$ contains a large number of compact sets each of which is mapped univalently by $f$ onto some $T(r')$, where $r'$ is large compared to $r$; these are the sets $V_{q, r}$ in the statement of the following lemma. Preimages of these sets $V_{q, r}$ are subsequently used to construct a set which lies in $A(f)$ and has Hausdorff dimension $2$.

If $U\subset \mathbb{C}$ is measurable, then we denote the Lebesgue measure of $U$ by area$(U)$. Note that, in the statement of the following lemma, the real valued function $t(r)$ is defined in (\ref{trdef}) below, and may be taken to be small compared to $r$, and the real valued function $m(r)$ is defined in (\ref{mrdef}) below, and may be taken to be large.
\begin{lemma}
\label{domainslemma}
Suppose that $f$ is a {\tef} of the form (\ref{fdef}), such that (\ref{angleconditiona}) is satisfied, and that $r_1$ is as defined in the statement of Lemma~\ref{sizelemma}. Then there exist constants $c_2 > 0$ and $r_2 \geq r_1$ with the following property. For all $r \geq r_2$, there exist points $b_{q, r}$, domains $U_{q, r}$, and connected compact sets $V_{q, r}$, such that the following all hold, for $1 \leq q \leq m(r)$.
\begin{enumerate}[(i)]
\item The discs $B(b_{q, r}, t(r))$ are pairwise disjoint.
\item $f(b_{q, r}) > 0$.
\item $V_{q, r} \subset U_{q, r} \subset B(b_{q, r}, t(r)) \subset B(b_{q, r}, 2t(r)) \subset T(r)$.
\item The function $f$ is bounded away from zero in $U_{q, r}$. Moreover, there is a branch of the logarithm and some $\kappa\in\{1,2,3\}$ such that:
     \begin{enumerate}[(a)]
     \item $U_{q, r}$ is mapped bijectively by $\log f$ onto $\Omega_\kappa(b_{q,r})$;
     \item $V_{q, r}$ is mapped bijectively by $\log f$ onto $Q_\kappa(b_{q,r})$;
     \item $V_{q, r}$ is mapped bijectively by $f$ onto $T(f(b_{q,r}))$.
     \end{enumerate}
\item $\operatorname{area}(V_{q, r}) \geq c_2 t(r)^2$.%, and also that
\end{enumerate}
\end{lemma}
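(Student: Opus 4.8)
The plan is to exhibit, inside $T(r)$, a long sequence of disjoint small discs on the positive-image locus of $f$, and on each such disc build the domains $U_{q,r}$ and $V_{q,r}$ as exact preimages under $\log f$ of the fixed rectangles $\Omega_\kappa$ and $Q_\kappa$ from \eqref{thePk}--\eqref{theQk}, translated to the appropriate real part $\log f(b_{q,r})$. The key point driving everything is Lemma~\ref{sizelemma}: the estimate \eqref{lastlogderiv} says that on $T(r)$ the logarithmic derivative has modulus at least $\tfrac{\sigma^6}{64}\tfrac{\log M(r,f)}{r\log r}$, a quantity I will call (up to constants) the ``expansion rate'' on $T(r)$; and \eqref{logderivdunchangemuch} says this rate varies by at most a fixed factor $\sigma^4/4$ across all of $T(r)$. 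Consequently $\log f$, on a small disc of radius $t(r)$ inside $T(r)$, is a conformal map whose derivative is roughly constant of size comparable to $\tfrac{\log M(r,f)}{r\log r}$; so if I choose
\begin{equation*}
t(r) \asymp \frac{r\log r}{\log M(r,f)},
\end{equation*}
which is the natural definition of $t(r)$ in \eqref{trdef} and is indeed small compared to $r$ by \eqref{itgrows}, then $\log f$ maps a disc $B(b_{q,r},t(r))$ onto a region of roughly unit size — large enough to contain one of the three unit-height boxes $\Omega_\kappa(b_{q,r})$. Pulling $\Omega_\kappa(b_{q,r})$ back gives $U_{q,r}$, pulling back the compact sub-box $Q_\kappa(b_{q,r})$ gives $V_{q,r}$, and (iv)(a),(b) hold by construction. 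For (iv)(c), note that the exponential map sends $\Omega_\kappa(b_{q,r})$ onto an annular sector about $f(b_{q,r})$ and sends $Q_\kappa(b_{q,r})$ precisely onto $\{w : 0 \le \log|w| - \log f(b_{q,r}) \le \log 2,\ |\arg w| \le \psi - \theta_2\} = T(f(b_{q,r}))$, using the $\bmod\,2\pi$ freedom in the imaginary part of the box being centred at $8\pi\kappa$; composing, $V_{q,r}$ is mapped bijectively by $f$ onto $T(f(b_{q,r}))$.

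**Next I would locate the base points.** I need points $b$ in (roughly) the middle of the sector $S(r)$, at radii between $r$ and $2r$, with $f(b) > 0$, and spaced at least $2t(r)$ apart; the number of them I can fit is the function $m(r)$ of \eqref{mrdef}. To produce points with $f(b) > 0$ I use that $\arg f$ changes rapidly along a ray or a short arc: by \eqref{logderivisnice}, $\bigl|r\tfrac{f'(r)}{f(r)}\bigr| \ge \tfrac{\sigma^2}{8}\tfrac{\log M(r,f)}{\log r}$, and by \eqref{logderivdunchangemuch} the logarithmic derivative has comparable modulus throughout $T(r)$; integrating $\arg f$ along a suitable curve (say a vertical segment or a sub-arc of a circle inside the sector) of length a small multiple of $t(r) \asymp r\log r/\log M(r,f)$, the argument of $f$ winds through a total variation that is bounded above and below by fixed multiples of $1$ — hence passes through a value $\equiv 0 \bmod 2\pi$, giving a point with $f > 0$. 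Choosing consecutive such points on parallel curves spaced $\asymp t(r)$ apart, and noting that the sector $S(r) \cap \{r \le |z| \le 2r\}$ has area $\asymp r^2$ while each forbidden disc $B(b_{q,r},2t(r))$ has area $\asymp t(r)^2$, I can fit $m(r) \asymp (r/t(r))^2 \asymp (\log M(r,f)/\log r)^2$ of them — consistent with $m(r)$ ``may be taken to be large''. This gives (i), (ii), (iii).

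**The remaining claim is the area estimate (v).** Here I use the standard distortion estimate for univalent functions: on $B(b_{q,r},2t(r))$ the map $g = \log f$ is injective (it is injective because its derivative $g' = f'/f$ is, up to the bounded factor from \eqref{logderivdunchangemuch}, nearly constant in modulus on $T(r)$, so a Bloch/Koebe-type argument — or more simply the fact that $g$ is close to affine on the smaller disc $B(b_{q,r},t(r))$ — gives injectivity there), and on the slightly smaller disc $B(b_{q,r},t(r))$ Koebe distortion bounds $|g'(z)/g'(b_{q,r})|$ above and below by absolute constants. Then $V_{q,r} = g^{-1}(Q_\kappa(b_{q,r}))$ has area
\begin{equation*}
\operatorname{area}(V_{q,r}) = \int_{Q_\kappa(b_{q,r})} \frac{1}{|g'(g^{-1}(\zeta))|^2}\, dA(\zeta) \ge \frac{\operatorname{area}(Q_\kappa)}{C |g'(b_{q,r})|^2} \asymp \frac{1}{|f'(b_{q,r})/f(b_{q,r})|^2} \asymp t(r)^2,
\end{equation*}
the last step again by \eqref{lastlogderiv}/\eqref{logderivdunchangemuch} and the definition of $t(r)$; since $\operatorname{area}(Q_\kappa) = 2\log 2 \,(\psi-\theta_2)$ is a fixed positive constant, this yields $\operatorname{area}(V_{q,r}) \ge c_2 t(r)^2$ for an absolute $c_2 > 0$.

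**The main obstacle** I expect is making the base-point count $m(r)$ and the radius $t(r)$ fit together cleanly and uniformly in $r$: I have to be sure that $B(b_{q,r},2t(r)) \subset T(r)$ — i.e. the discs stay inside the narrow sector of half-angle $\psi-\theta_2$ and inside the annulus $r \le |z| \le 2r$ — which forces $t(r)$ to be genuinely small relative to $r$ (so the sector is ``wide'' on the scale $t(r)$) and needs \eqref{itgrows} to guarantee $t(r)/r \to 0$; and simultaneously I need $\log f$ to spread $B(b_{q,r},t(r))$ over something taller than $6\pi$ and wider than $2\log 2$ so that a full translated box $\Omega_\kappa(b_{q,r})$ genuinely fits inside $g(B(b_{q,r},t(r)))$. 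Pinning down the precise constant in the definition of $t(r)$ so that all of these simultaneous inequalities hold — small enough for containment in $T(r)$, large enough that the image of $g$ engulfs the box — is the delicate bookkeeping step; the rest is routine application of Koebe distortion and the estimates of Lemma~\ref{sizelemma}.
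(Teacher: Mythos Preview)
Your overall plan matches the paper's: choose $t(r)$ comparable to $|f(r)/f'(r)|$, pack $m(r)\asymp (r/t(r))^2$ disjoint discs of radius a fixed multiple of $t(r)$ into $T(r)$, locate in each a point $b$ with $f(b)>0$, and pull back the boxes $\Omega_\kappa(b)$, $Q_\kappa(b)$ under a branch of $\log f$. The area estimate (v) is also essentially the paper's (the paper uses the direct Jacobian bound $\operatorname{area}(Q_\kappa)\le \sup_{V}|f'/f|^{2}\operatorname{area}(V)$ together with \eqref{logderivdunchangemuch}, rather than Koebe on the inverse, but either works once univalence is in hand).

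The genuine gap is the step where you produce the univalent preimage $U$ of $\Omega_\kappa(b)$, and relatedly where you produce $b$ itself. You argue that because $|g'|=|f'/f|$ is nearly constant in modulus on $T(r)$, the map $g=\log f$ is ``close to affine'' on $B(b,t(r))$ and hence injective with image engulfing a prescribed rectangle. But \eqref{logderivdunchangemuch} controls only $|g'|$, not $\arg g'$; a holomorphic $g$ with $|g'|$ pinched between two positive constants need not be injective and need not cover any prescribed set (think of $g$ with $g'(z)=e^{iz}$). The same objection undermines your argument-winding construction of $b$: knowing only $|g'|$, you cannot say that $\operatorname{Im} g=\arg f$ (as opposed to $\operatorname{Re} g$) changes by $2\pi$ along your curve. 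Bloch's theorem, which you invoke in passing, gives a schlicht disc \emph{somewhere} in the image, not one covering a \emph{specified} $\Omega_\kappa(b)$.

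The paper resolves this with the Ahlfors islands theorem (Lemma~\ref{constlemma}): fixing the three disjoint rectangles $P_\kappa$, there is a constant $\nu$ such that whenever $|h'(\beta)|/(1+|h(\beta)|^2)\ge \nu/t$ on $B(\beta,t)$, some subdomain of $B(\beta,t)$ is mapped bijectively by $h$ onto one of the $P_\kappa$. With $t(r)=\tfrac{8\nu}{\sigma^4}|f(r)/f'(r)|$ this hypothesis holds for $h_\beta=\log f-\log f(\beta)$ by \eqref{logderivdunchangemuch}, so a first application of Ahlfors on $B(\beta,t(r))$ yields a point $b$ with $h_\beta(b)=8\pi i\kappa$, hence $f(b)>0$; a second application on $B(b,t(r))$ yields the subdomain $U$ mapped bijectively by $h_b$ onto some $P_\kappa$, i.e.\ by $\log f$ onto $\Omega_\kappa(b)$. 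This is exactly the tool for ``large $|g'|$ but no control on $\arg g'$'', and it is what your sketch is missing.
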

The various sets and points constructed in this lemma are illustrated in Figure~\ref{fig.y}. %Roughly speaking, the set inclusions in part (iii) allow us to control the distortion of an inverse branch of $f$ which can be defined in $B(b_{q, r}, t(r))$. 
The proof of this lemma is given in Section~\ref{Sdomains}.

\begin{figure}[ht]
	\centering
	\includegraphics[width=10cm,height=7cm]{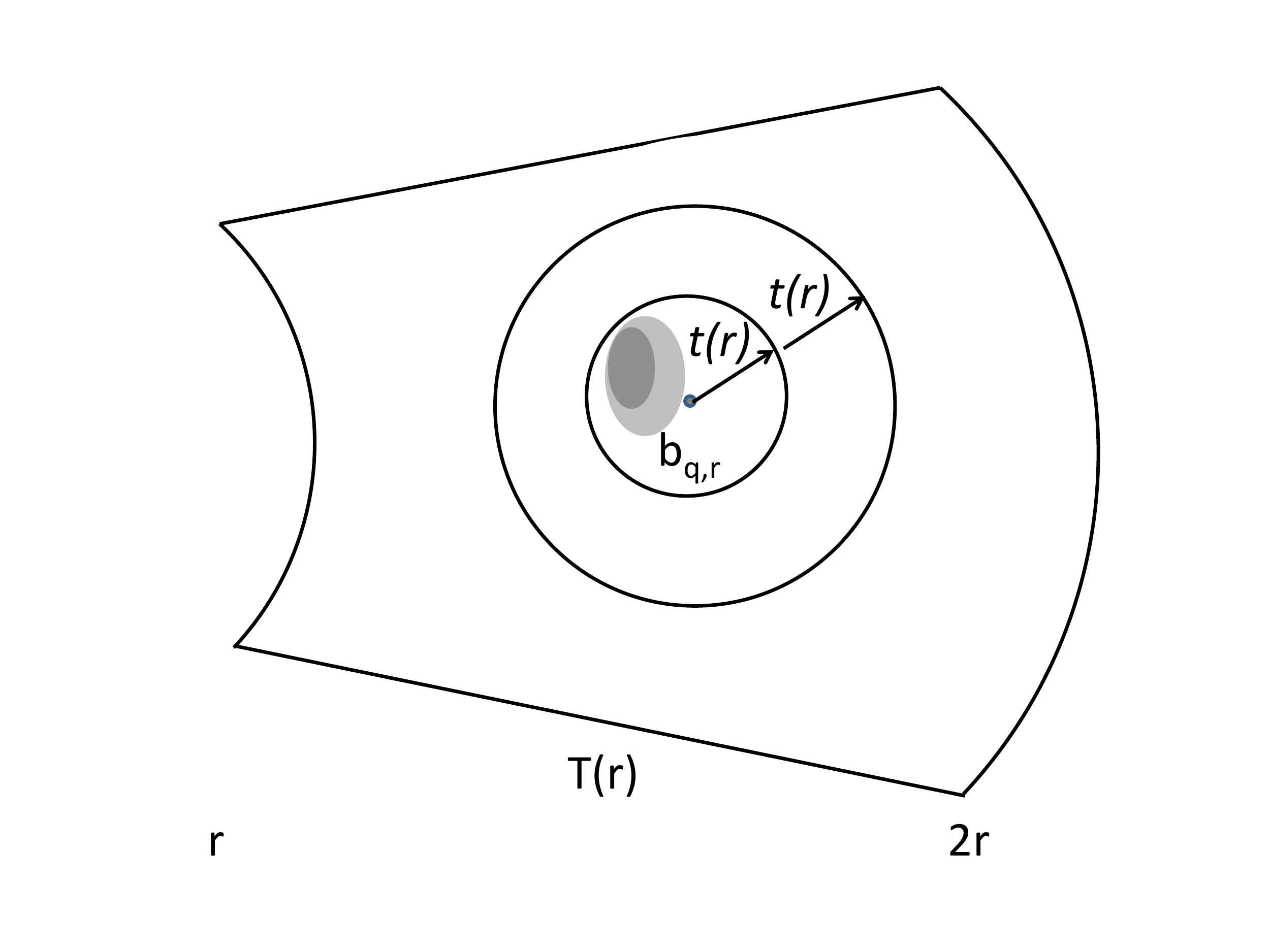}
	\caption{Sets and points constructed in Lemma~\ref{domainslemma}. The domain $U_{q,r}$ is shown lightly shaded, and the compact set $V_{q,r}$ is shown shaded darkly. In this case, to make the sets easy to identify, we have assumed that $m(r) = 1$.}
  \label{fig.y}
\end{figure}

We use a well-known construction of McMullen in order to estimate the Hausdorff dimension of $J(f) \cap A(f)$. For each $n\isnatural$, suppose that $\mathcal{E}_n$ is a finite collection of pairwise disjoint compact subsets of $\mathbb{C}$ such that the following both hold:
\begin{enumerate}[(i)]
\item If $F \in \mathcal{E}_{n+1}$, then there exists a unique $G \in \mathcal{E}_{n}$ such that $F \subset G$;
\item If $G \in \mathcal{E}_{n}$, then there exists at least one $F \in \mathcal{E}_{n+1}$ such that $G \supset F$.
\end{enumerate}
We write 
\begin{equation}
\label{Edef}
E_n = \bigcup_{F\subset \mathcal{E}_n} F, \text{ for } n\isnatural, \quad \text{and} \quad E = \bigcap_{n\isnatural} E_n.
\end{equation}

Our final lemma is as follows. Here, for measurable sets $U$ and $V$, we define $$\operatorname{dens}(U, V) = \frac {\operatorname{area} (U \cap V)}{\operatorname{area}(V)}.$$
\begin{lemma}
\label{Elemma}
Suppose that $f$ is a {\tef} of the form (\ref{fdef}), such that (\ref{angleconditiona}) is satisfied, that $r_2$ is as defined in the statement of Lemma~\ref{domainslemma} and that $\mu$ is the function defined in (\ref{mudef}). Then there exists a sequence of finite collections of pairwise disjoint compact sets, $(\mathcal{E}_n)_{n\isnatural}$, which satisfies conditions (i) and (ii) above, and, with $E$ and $(E_n)_{n\isnatural}$ as defined in (\ref{Edef}), such that 
\begin{equation}
\label{Eisnice}
f^n(z) \in S(r_2), \qfor n\in\{0, 1, \ldots\}, \ z \in E.
\end{equation}
In addition there exist a constant $c_3 > 0$ and a sequence of positive real numbers $(d_n)_{n\isnatural}$, with $d_n \rightarrow 0$ as $n\rightarrow\infty$, such that if $n\isnatural$ and $F\in \mathcal{E}_n$, then 
\begin{equation}
\label{densityofsets}
\operatorname{dens}(E_{n+1},F) \geq c_3,
\end{equation}
\begin{equation}
\label{diamofsets}
\operatorname{diam}F \leq d_n,
\end{equation}
and
\begin{equation}
\label{dneq}
\lim_{n\rightarrow\infty} \frac{n}{|\log d_n|} = 0.
\end{equation}
\end{lemma}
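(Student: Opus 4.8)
The plan is to construct the collections $\mathcal{E}_n$ inductively using the sets $V_{q,r}$ supplied by Lemma~\ref{domainslemma}, arranging that iteration of $f$ maps each generation onto the next while keeping the whole construction inside $S(r_2)$. I would start by choosing an increasing sequence of radii $R_0 < R_1 < R_2 < \cdots$ with $R_0 \geq r_2$ and $R_{n+1}$ comparable to the image radius produced by $f$ on $T(R_n)$; concretely, since (\ref{comparabletoM}) gives $|f(z)| \geq \mu(|z|)$ on $S(r_1)$, and since $V_{q,r}$ is mapped bijectively by $f$ onto $T(f(b_{q,r}))$ with $f(b_{q,r}) > 0$ large, one can track how the radii grow and set $R_{n+1} = f(b_{q,R_n})$ for an appropriate choice. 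The base collection $\mathcal{E}_0$ will be a single set, say $\overline{V_{1,R_0}}$ (or a slightly shrunk compact piece of it), and given $F \in \mathcal{E}_n$ with $f^n$ mapping $F$ bijectively onto some $T(R_n)$-type set, I would apply Lemma~\ref{domainslemma} at radius $R_n$ to obtain the sets $V_{q,R_n} \subset T(R_n)$ for $1 \leq q \leq m(R_n)$, and define the children of $F$ to be the pullbacks $(f^n|_F)^{-1}(V_{q,R_n})$. Property (iii) of Lemma~\ref{domainslemma} guarantees these lie inside $T(R_n) \subset S(r_2)$, property (i) gives pairwise disjointness, and property (iv)(c) gives the bijectivity needed to continue the induction; the nesting and non-emptiness conditions (i) and (ii) for the McMullen construction are then immediate.

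Next I would verify the three quantitative estimates. For (\ref{Eisnice}): if $z \in E$ then $z \in E_n$ for every $n$, so $f^n(z)$ lies in a set of the form $V_{q,R_n} \subset T(R_n) \subset S(R_n) \subset S(r_2)$, which is exactly the claim. For the density estimate (\ref{densityofsets}): given $F \in \mathcal{E}_n$, we have $f^n$ mapping $F$ univalently onto some $T(\rho)$, and the children of $F$ are the preimages under $f^n|_F$ of the sets $V_{q,\rho}$, while $E_{n+1} \cap F$ is the preimage of $\bigcup_q V_{q,\rho}$. Using that $V_{q,\rho} \subset B(b_{q,\rho}, t(\rho))$ with the discs disjoint and inside $T(\rho)$, together with $\operatorname{area}(V_{q,\rho}) \geq c_2 t(\rho)^2$ from property (v), I would first establish a density bound $\operatorname{dens}(\bigcup_q V_{q,\rho}, T(\rho)) \geq c_2'$ at the level of the images (this needs $m(\rho)$ discs of radius $t(\rho)$ packed into $T(\rho)$ to cover a definite fraction of its area — this is where the specific definitions (\ref{trdef}) and (\ref{mrdef}) of $t(r)$ and $m(r)$ enter), and then transfer it downstairs by the distortion theorem for the univalent map $f^n|_F$, since bounded distortion preserves density up to a universal constant depending only on the distortion bound. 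To keep distortion under control I would ensure each $F$ sits well inside a larger univalent preimage (using the domains $U_{q,r}$, which contain the $V_{q,r}$ with definite room, via property (iv)(a)), so Koebe-type estimates apply on a fixed scale.

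For the diameter estimates (\ref{diamofsets}) and (\ref{dneq}): since $f^n|_F$ is univalent onto a $T(\rho)$-set of diameter comparable to $\rho = R_n$, and $F \subset T(R_0)$ is bounded, the expansion forces $\operatorname{diam} F \leq d_n$ where $d_n$ is controlled by $R_0 / R_n$ times distortion constants — more precisely by the reciprocal of the derivative of $f^n$, which by the chain rule is a product of factors each bounded below using (\ref{lastlogderiv}) from Lemma~\ref{sizelemma}. The crucial point for (\ref{dneq}) is that $R_n$ grows so fast — roughly $R_{n+1} \approx \mu(R_n) = M(\sigma R_n, f)$ by (\ref{comparabletoM}) — that $|\log d_n|$ grows super-linearly in $n$: iterating the maximum modulus function gives $\log R_{n+1} \gg \log R_n$ (indeed $\log R_{n+1} / \log R_n \to \infty$ by (\ref{itgrows})), so $\log R_n$, and hence $|\log d_n|$, dominates any linear function of $n$. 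I expect the main obstacle to be the bookkeeping in the density estimate: one must choose $t(r)$ and $m(r)$ so that the disjoint discs $B(b_{q,r}, t(r))$ genuinely fill a positive proportion of $T(r)$ while simultaneously $t(r)$ is small enough that the distortion of $f^n$ restricted to each preimage piece is uniformly bounded across all $n$; balancing these two requirements, and checking that the constant $c_3$ can be taken independent of $n$, is the delicate part, whereas the rapid-growth facts needed for (\ref{dneq}) follow cleanly from (\ref{itgrows}) and the definition of $\mu$.
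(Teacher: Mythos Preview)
Your overall architecture matches the paper's: build $\mathcal{E}_n$ inductively by pulling back the sets $V_{q,\rho}$ under the univalent branches of $f^n$, verify (\ref{Eisnice}) from the nesting, get the density bound by comparing areas at the image level and transferring via distortion, and get the diameter bound from the chain rule together with (\ref{lastlogderiv}) and the super-exponential growth of the iterated maximum modulus.

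There is, however, one point where your plan is imprecise and where the paper does something you have not anticipated. For (\ref{densityofsets}) you propose to pull back the density from $T(\rho)$ to $F$ using Koebe distortion for the inverse branch $(f^n|_F)^{-1}$, justified by the fact that ``each $F$ sits well inside a larger univalent preimage''. But Koebe-type control of the \emph{inverse} map requires an extension on the \emph{image} side: you need $(f^n|_F)^{-1}$ to extend univalently to a domain containing $T(\rho)$ with fixed conformal geometry, uniformly in $n$. You have not identified such a domain, and it is not immediate from the data in Lemma~\ref{domainslemma}, since $f$ itself is not injective on $U_{q,r}$ (the image $\log f(U_{q,r})=\Omega_\kappa$ has height $6\pi$, so $\exp$ wraps around). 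The paper sidesteps this by working in logarithmic coordinates: $\log f^n$ maps $F$ bijectively onto $Q_\kappa(a)$ and its inverse extends to $\Omega_\kappa(a)$; since all pairs $(\Omega_\kappa(a),Q_\kappa(a))$ are translates of a single pair, Lemma~\ref{distlemma} yields a distortion constant $C$ independent of $n$. The density of $\bigcup_q V_{q,\rho}$ is then computed in these log-coordinates as well, which is why the definitions (\ref{theOmegak}) and (\ref{theQk}) were set up. Your diagnosis that the ``main obstacle'' is balancing $t(r)$ against distortion is therefore slightly off: the uniform distortion comes from the fixed shape of $(\Omega_\kappa,Q_\kappa)$, not from the smallness of $t(r)$.

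Two smaller points. First, the image radii depend on the branch (your ``$R_{n+1}=f(b_{q,R_n})$ for an appropriate choice'' hides this); the paper lets $\rho_{n,F}$ vary with $F$ and then proves a uniform lower bound $\rho_m\ge M^m(R_1)$, which is what makes the telescoping product $\prod_m \log\rho_m/\log M(\rho_m,f)$ collapse to $\log R_1/\log M^n(R_1)$. Second, for (\ref{dneq}) the paper invokes the fact that $\log\log M^n(R,f)/n\to\infty$ (Lemma~\ref{Lstrangecondition}); your sketch that $\log R_{n+1}/\log R_n\to\infty$ is the right intuition but you should note that this is exactly the content of that lemma.
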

We prove this lemma in Section~\ref{buildsets}. 

%
%%%%%%%%%%%%%%
%
%%%%%%%%%%%%%%
%
\section{Proof of Theorem~\ref{maintheo}}
\label{Slast}
To prove Theorem~\ref{maintheo}, we require the following three lemmas. The first is a key result of McMullen \cite[Proposition 2.2]{MR871679}.
\begin{lemma}
\label{mcmlemma}
Suppose that there exists a sequence of finite collections of pairwise disjoint compact sets, $(\mathcal{E}_n)_{n\isnatural}$, which satisfies conditions (i) and (ii) above, and let $E$ and $(E_n)_{n\isnatural}$ be as defined in (\ref{Edef}). Suppose also that $(\Delta_n)_{n\isnatural}$ and $(d_n)_{n\isnatural}$ are sequences of positive real numbers, with $d_n \rightarrow 0$ as $n\rightarrow\infty$, such that for each $n\isnatural$ and for each $F \in \mathcal{E}_n$, we have $$\operatorname{dens}(E_{n+1}, F) \geq \Delta_n\quad\text{and}\quad\operatorname{diam} F \leq d_n.$$ Then
\begin{equation*}
\dim_H E \geq 2 - \limsup_{n\rightarrow\infty} \frac{\sum_{k=1}^n | \log \Delta_k|}{| \log d_n|}.
\end{equation*}
\end{lemma}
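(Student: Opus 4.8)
The plan is to establish this by the method of McMullen \cite{MR871679}: construct a Borel probability measure $\nu$ carried by $E$, bound the $\nu$-measure of small balls from above by a power of the radius, and then appeal to the mass distribution principle \cite{falconer}. First I would build $\nu$ as a subsequential weak limit of a sequence $(\nu_n)$. Set $\nu_1$ to be two-dimensional Lebesgue measure restricted to $E_1$ and normalised to total mass $1$ (every member of every $\mathcal{E}_n$ has positive finite area, which is implicit in the density hypotheses), and, given $\nu_n$, define $\nu_{n+1}$ by redistributing, within each $F\in\mathcal{E}_n$, the mass $\nu_n(F)$ as a scalar multiple of Lebesgue measure restricted to $E_{n+1}\cap F$. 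Each $\nu_n$ is then a probability measure supported on the fixed compact set $E_1$, so a weakly convergent subsequence exists; since $E_{n+1}\subset E_n$, any weak limit $\nu$ is supported on $\bigcap_n E_n=E$, which one checks with the portmanteau theorem.

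The quantitative content consists of two estimates, the second of which I expect to be the main obstacle. The first is a bound on the mass of a single piece: if $F\in\mathcal{E}_n$ and $F=F_n\subset F_{n-1}\subset\cdots\subset F_1$ is its chain of ancestors supplied by condition (i), then, since the mass a piece carries is never altered once the piece has appeared, telescoping the construction and using the density hypothesis in the form $\operatorname{area}(E_k\cap F_{k-1})\ge\Delta_{k-1}\operatorname{area}(F_{k-1})$ at each step gives
\[
\nu_n(F)\;\le\;\frac{\operatorname{area}(F)}{\operatorname{area}(E_1)\,\prod_{k=1}^{n-1}\Delta_k}.
\]
The second is the ball estimate. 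Given $x\in E$ and small $\rho>0$, let $n=n(\rho)$ be the least index with $d_n\le\rho$; since each $d_n$ is positive and $d_n\to0$, this is well defined and $n(\rho)\to\infty$ as $\rho\to0$, and --- importantly --- no monotonicity of $(d_n)$ is assumed or needed. For $m\ge n$ the measure $\nu_m$ is carried by $E_n=\bigcup_{F\in\mathcal{E}_n}F$ and still assigns mass $\nu_n(F)$ to each $F\in\mathcal{E}_n$; any such $F$ meeting $B(x,\rho)$ has diameter at most $d_n\le\rho$, hence lies in $\overline{B}(x,2\rho)$, and the members of $\mathcal{E}_n$ are pairwise disjoint, so the first estimate yields
\[
\nu_m(B(x,\rho))\;\le\;\frac{1}{\operatorname{area}(E_1)\,\prod_{k=1}^{n-1}\Delta_k}\sum_{F}\operatorname{area}(F)\;\le\;\frac{4\pi\rho^2}{\operatorname{area}(E_1)\,\prod_{k=1}^{n-1}\Delta_k},
\]
the sum running over those (disjoint) $F\in\mathcal{E}_n$ contained in $\overline{B}(x,2\rho)$. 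Applying the portmanteau theorem to the open ball $B(x,\rho)$, the same bound holds for $\nu(B(x,\rho))$.

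To finish, I would fix $s$ with $0<s<2-L$, where $L=\limsup_{n\to\infty}\bigl(\sum_{k=1}^n|\log\Delta_k|\bigr)/|\log d_n|$. For all sufficiently small $\rho$ the index $m:=n(\rho)-1$ is large, so, because $L<2-s$, we have $\sum_{k=1}^m|\log\Delta_k|<(2-s)|\log d_m|$; since moreover $0<\rho<d_m<1$ we have $|\log d_m|<|\log\rho|$, and substituting into the ball estimate (writing $\prod_{k=1}^m\Delta_k=\exp(-\sum_{k=1}^m|\log\Delta_k|)$) gives $\nu(B(x,\rho))\le(4\pi/\operatorname{area}(E_1))\,\rho^s$. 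The remaining, larger values of $\rho$ are harmless because $\nu$ is a probability measure, so $\nu(B(x,\rho))\le C_s\rho^s$ for all $x$ and all $\rho\in(0,\operatorname{diam} E_1]$, with $C_s$ depending only on $s$. The mass distribution principle \cite{falconer} then gives $\mathcal{H}^s(E)\ge\mathcal{H}^s(\operatorname{supp}\nu)>0$, hence $\dim_H E\ge s$; letting $s\uparrow2-L$ yields the asserted inequality.
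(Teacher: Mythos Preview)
Your argument is correct and is precisely McMullen's mass-distribution proof; the paper itself does not prove this lemma at all but simply quotes it as \cite[Proposition~2.2]{MR871679}, so there is no ``paper's own proof'' to compare against beyond the citation. Your write-up is a faithful reconstruction of that cited argument, and the care you take over the non-monotonicity of $(d_n)$ and over the edge cases ($n(\rho)=1$, large $\rho$) is appropriate.
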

The second, which is a version of \cite[Theorem~2.7]{Rippon01102012}, gives an alternative characterisation of $A(f)$. 
\begin{lemma}
\label{otherA}
Suppose that $f$ is a {\tef} and that $R > 0$ is such that $\mu(r) > r$, for $r \geq R$, where $\mu$ is the function defined in (\ref{mudef}). Then
\begin{equation*}
A(f) = \{z : \text{there exists } \ell \isnatural \text{ such that } |f^{n+\ell}(z)|\geq \mu^n(R), \text{ for } n\isnatural \}.
\end{equation*}
\end{lemma}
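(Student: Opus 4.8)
The statement to prove is Lemma~\ref{otherA}, which gives an alternative characterisation of $A(f)$ using the function $\mu^n(R)$ in place of $M^n(R,f)$.

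The plan is to show both inclusions, relying on the standard flexibility in the definition of $A(f)$: the key fact (from \cite{Rippon01102012}, in particular the results leading to Theorem~2.7 there) is that the set $A(f)$ does not depend on the choice of $R$ in the defining condition $|f^{n+\ell}(z)| \ge M^n(R,f)$, provided $R$ is large enough that $M(r,f) > r$ for $r \ge R$. So the task reduces to comparing iterates of $\mu$ with iterates of $M(\cdot,f)$.

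First I would record the elementary observation that $\mu(r) = M(\sigma r, f) \le M(r, f)$ for all $r > 0$, since $\sigma = \cos\psi' < 1$ and $M(\cdot,f)$ is increasing. Iterating and using monotonicity of $M$ (and of $\mu$, once we are above $r_0$) gives $\mu^n(R) \le M^n(R,f)$ for all $n$, once $R \ge r_0$. Hence any point satisfying $|f^{n+\ell}(z)| \ge M^n(R,f)$ for all $n$ automatically satisfies $|f^{n+\ell}(z)| \ge \mu^n(R)$ for all $n$; this gives the inclusion of $A(f)$ in the set on the right. For the reverse inclusion, suppose $|f^{n+\ell}(z)| \ge \mu^n(R)$ for all $n$. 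Since $\mu(r)/r = M(\sigma r, f)/r \to \infty$ as $r \to \infty$ by (\ref{itgrows}), one can choose $R' \ge R$ large enough that $\mu(r) \ge M(R', f) \cdot$ (something)—more precisely, I would compare $\mu^n(R)$ from below with $M^n(R',f)$ for a suitably enlarged $R'$: because $\mu$ eventually dominates any fixed linear function, there is $R'$ with $M(r,f) \le \mu(r)$ for $r \ge R'$... wait, that is false in general. The cleaner route is: pick $R'$ so large that $\mu(R') \ge R'$ and also $\mu^j(R) \ge R'$ for some fixed $j$ (possible since $\mu^j(R) \to \infty$), and then show $\mu^{n+j}(R) \ge M^n(R', f)$ by induction, using that on $[R',\infty)$ one has $M(r,f) \le M(r/\sigma, f) = \mu(r/\sigma) \le \mu^{2}(\text{...})$—here I would instead invoke the known result that $A(f)$ is independent of which function from a suitable class (those eventually exceeding the identity and satisfying a mild comparability with $M$) is used to define it; $\mu$ qualifies because $\mu(r) = M(\sigma r, f)$ and $r \le M(\sigma r, f)$ for large $r$ together with $M(\sigma r, f) \le M(r,f)$ sandwich it appropriately. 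Concretely, Theorem~2.7 of \cite{Rippon01102012} already allows replacing $M(R,f)$ by $M(R/C, f)$-type expressions, which is exactly the content of $\mu$.

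The main obstacle is the reverse inclusion, i.e. showing that satisfying the weaker-looking bound with $\mu^n(R)$ still forces fast escape in the sense of the original definition. The essential point to pin down is that $\mu^n(R)$ grows comparably to $M^n(R', f)$ after a bounded shift in $n$ and an enlargement of the base point $R \to R'$; once that comparison is established by induction (using monotonicity of $M$ and the fact that $\mu$ eventually exceeds the identity map so that a fixed number of extra iterations of $\mu$ absorbs the loss from the factor $\sigma$), the result follows immediately, and indeed this is precisely the mechanism behind the cited \cite[Theorem~2.7]{Rippon01102012}, so I would phrase the proof as a direct application of that theorem with the constant chosen so that $1/C \le \sigma$, noting that (\ref{mugrows}) supplies the required hypothesis $\mu(r) > r$ for $r \ge R$.
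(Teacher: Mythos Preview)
The paper does not actually prove this lemma: it simply states it as ``a version of \cite[Theorem~2.7]{Rippon01102012}'' and cites that result directly. Your final conclusion --- that the statement follows by a direct application of \cite[Theorem~2.7]{Rippon01102012}, since $\mu(r)=M(\sigma r,f)$ with $0<\sigma<1$ is exactly the kind of modified growth function that theorem covers --- is correct and matches the paper's treatment. The intermediate attempts in your write-up (one of which you yourself flag as false) are unnecessary; a one-line citation suffices.
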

The third lemma is \cite[Theorem 3.1]{areapaper}.
\begin{lemma}
\label{LinJulia}
Suppose that $f$ is a {\tef} and that $z~\in~I(f)$. Set $z_n = f^n(z)$, for $n\isnatural$. Suppose that there exist $\lambda > 1$ and $N\geq 0$ such that 
\begin{equation}
\label{orbiteq}
f(z_{n}) \ne 0 \quad\text{ and }\quad \left|z_n \frac{f'(z_n)}{f(z_n)}\right| \geq \lambda, \qfor n\geq N.
\end{equation}
Then either $z$ is in a multiply connected Fatou component of $f$, or $z \in J(f)$.
\end{lemma}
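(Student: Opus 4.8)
The plan is to prove the contrapositive. Suppose $z \in F(f) \cap I(f)$, write $U_n$ for the Fatou component containing $z_n = f^n(z)$ (so $f(U_n) \subseteq U_{n+1}$ and, since $z\in I(f)$, $|z_n| \to \infty$), and suppose that $U_0$ is not multiply connected; I want to show that \eqref{orbiteq} cannot hold. The first step is to reduce to the case in which \emph{every} $U_n$ is simply connected. I expect this reduction to be the main obstacle: it relies on the structure theory of multiply connected Fatou components --- Baker's theorem that such components are escaping wandering domains built from large round annuli, together with the behaviour of their forward orbits --- and it is genuinely needed, since the hyperbolic estimates below fail for multiply connected domains (this is exactly why the conclusion must permit $z$ to lie in a multiply connected component).

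Granting that every $U_n$ is simply connected, the heart of the proof is a telescoping application of the Schwarz--Pick lemma. Each $U_n$ is a proper subdomain of $\mathbb{C}$, since the Julia set of a {\tef} is non-empty, and so carries a hyperbolic metric $\rho_{U_n}$. Assume, for a contradiction, that \eqref{orbiteq} holds with some $\lambda > 1$ for all $n \ge N$, and set $a_n = |z_n|\,\rho_{U_n}(z_n) > 0$. For $n \ge N$ we have $f(z_n) \ne 0$ and, hence by \eqref{orbiteq}, $f'(z_n) \ne 0$, and Schwarz--Pick applied to $f : U_n \to U_{n+1}$ at $z_n$ gives $\rho_{U_{n+1}}(z_{n+1})\,|f'(z_n)| \le \rho_{U_n}(z_n)$; multiplying this by $|z_{n+1}| = |f(z_n)|$ and simplifying yields
\[
a_{n+1} \;\le\; \frac{a_n}{\bigl|\,z_n f'(z_n)/f(z_n)\,\bigr|} \;\le\; \frac{a_n}{\lambda}, \qfor n \ge N .
\]
Hence $a_n \le \lambda^{N-n}\, a_N \to 0$ as $n \to \infty$.

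Finally I would derive a contradiction from $a_n \to 0$. Since $U_n$ is simply connected, the Koebe one-quarter theorem gives $\rho_{U_n}(z_n) \ge \frac{1}{2\operatorname{dist}(z_n,\partial U_n)}$, so $\operatorname{dist}(z_n,\partial U_n) \ge |z_n|/(2a_n)$, and therefore $U_n \supseteq B\!\left(z_n,\, |z_n|/(2a_n)\right) \supseteq B(0, R_n)$ where $R_n = |z_n|\bigl(1/(2a_n) - 1\bigr)$. Since $|z_n| \to \infty$ and $a_n \to 0$ we get $R_n \to \infty$, so for every $w \in \mathbb{C}$ we have $w \in B(0, R_n) \subseteq U_n \subseteq F(f)$ for all large $n$; thus $F(f) = \mathbb{C}$, contradicting that the Julia set of a {\tef} is non-empty. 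Hence \eqref{orbiteq} fails, so $z \in J(f)$ or $z$ lies in a multiply connected Fatou component, as claimed. Apart from the reduction flagged above, the only non-routine ingredients are the Schwarz--Pick inequality and the Koebe lower bound for the hyperbolic density, which I would simply quote.
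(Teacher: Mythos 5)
The paper itself does not prove this lemma; it simply quotes it from \cite[Theorem~3.1]{areapaper}, so your argument has to be judged on its own terms. Its analytic core is correct: the Schwarz--Pick inequality $\rho_{U_{n+1}}(z_{n+1})\,|f'(z_n)|\le\rho_{U_n}(z_n)$ does give $a_{n+1}\le a_n/\lambda$ for $n\ge N$ (hyperbolicity of the $U_n$ is automatic since $J(f)$ is infinite), and the Koebe lower bound for the hyperbolic density of a simply connected domain, combined with $|z_n|\to\infty$, does force $F(f)=\mathbb{C}$, contradicting the non-emptiness of the Julia set. The directions of all inequalities and the final limiting argument are fine.

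The genuine gap is the reduction you flag and then leave unproved. What you actually need is a pull-back statement: if $U$ is a Fatou component with $f(U)\subseteq V$ and $V$ multiply connected, then $U$ is multiply connected; iterating this is what allows you to assume every $U_n$ is simply connected once $U_0$ is. Baker's theorem, which you invoke, describes the \emph{forward} components of a multiply connected component (bounded, wandering, escaping, eventually containing large annuli); it says nothing directly about preimages, and the reduction cannot be bypassed, since for a multiply connected $U_n$ the best general lower bound for $\rho_{U_n}(z_n)$ is of order $1/(|z_n|\log|z_n|)$, so $a_n\to0$ gives no contradiction there. The missing fact is true and provable with the tools you name, for instance as follows. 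If $U$ is bounded and simply connected, then $\partial U$ is compact and connected, and using the standard fact that $V\setminus f(U)$ contains at most one point one obtains $f(\partial U)=\partial V$; but $\partial V$ is disconnected (a Jordan curve in $V$ surrounding a bounded complementary component, which must meet $J(f)$, separates $\partial V$), contradicting connectivity of the continuous image $f(\partial U)$. If $U$ is unbounded, then for large $n$ the set $f^n(V)$ contains round annuli centred at the origin with inner radius tending to infinity (Baker), so the unbounded connected set $U$ meets, and hence equals, the multiply connected component containing $f^n(V)$, again a contradiction. With this step supplied your proof is complete; without it, the case in which some forward component $U_k$ is multiply connected while $U_0$ is simply connected is not excluded, and your Koebe step does not apply to it.
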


Now we prove Theorem~\ref{maintheo}. Let $E$ be the set defined in the statement of Lemma~\ref{Elemma}. It follows at once from Lemma~\ref{Elemma} and Lemma~\ref{mcmlemma} that $\dim_H E = 2$. %\\ %We show that $E \subset J(f) \cap A(f)$.

We next show that $E \subset A(f)$. We note that it follows from (\ref{comparabletoM}) and (\ref{Eisnice}) that 
\begin{equation}
\label{fbig}
|f^n(z)| \geq \mu^n(|z|) \geq \mu^n(r_0), \qfor n\isnatural, z \in E.
\end{equation}
The fact that $E \subset A(f)$ follows from (\ref{mugrows}) and (\ref{fbig}), by Lemma~\ref{otherA}. 

It remains to show that either $f$ has a multiply connected Fatou component, or $E \subset J(f)$. It follows from (\ref{itgrows}) and (\ref{lastlogderiv}) that there exists $R'>0$ such that 
\begin{equation}
\label{zeq}
\left|z\frac{f'(z)}{f(z)}\right| \geq \frac{\sigma^6}{64} \frac{\log M(r,f)}{\log r} \geq 2, \qfor z \in T(r), \ r \geq R'.
\end{equation} 

Suppose that $z \in E$. Since $z \in A(f)$, there exists $N\isnatural$ such that $|f^n(z)| \geq R'$, for $n\geq N$. Hence, by (\ref{Eisnice}) and (\ref{zeq}), equation (\ref{orbiteq}) holds with $\lambda = 2$. We deduce from Lemma~\ref{LinJulia} that either $z\in J(f)$, or $z$ is in a multiply connected Fatou component of $f$. This completes the proof of the theorem.

%
%%%%%%%%%%%%%%%
%
%%%%%%%%%%%%%%%
%
\section{Proof of Lemma~\ref{sizelemma}}
\label{prooflemma1}
Recalling that $0 < \psi' < \pi/2$ and $\sigma = \cos \psi'$, we observe that, in general, 
\begin{equation}
\label{genleq}
\sigma \left|\zeta\right| \leq \Real (\zeta), \qfor |\arg(\zeta)| \leq \psi'.
\end{equation}
We choose $$r_1 \geq \max\left\{r_0, \frac{2|a_{N_0-1}|}{1-\sigma}, 3^{\frac{3}{2}}\right\}$$ sufficiently large that $M(r_1, f)~>~1$ and
\begin{equation*}
|\arg(z + a_n)| \leq \psi', \qfor z \in S(r_1), \ n\isnatural.
\end{equation*}
We put one additional lower bound on the size of $r_1$; this is after equation (\ref{qwe}).

It follows from (\ref{genleq}), with $\zeta = z + a_n$, that
\begin{equation}
\label{aneweq}
\Real\left(\frac{1}{z+a_n}\right) = \frac{\Real(z + a_n)}{|z+a_n|^2} \geq \frac{\sigma^2}{\Real(z + a_n)}, \qfor z \in S(r_1), \ n\isnatural.
\end{equation}

We first prove (\ref{comparabletoM}). Let $z \in S(r_1)$, and suppose that $w$ is such that $|w| = \sigma|z|$ and $M(\sigma|z|,f)~=~|f(w)|$. Then 
\begin{equation}
\label{mueq}
\mu(|z|) = |f(w)| \leq |c| |z|^q \prod_{n=1}^\infty \left|1 + \frac{w}{a_n}\right|.
\end{equation}

We claim that 
\begin{equation}
\label{claim1}
\left|1 + \frac{w}{a_n}\right| \leq \left|1 + \frac{z}{a_n}\right|, \qfor n\isnatural.
\end{equation}

First suppose that $1 \leq n< N_0$. Since $|z|  \geq r_1$, we have that $$\frac{|z|}{|a_n|} - \frac{|w|}{|a_n|} = (1-\sigma) \frac{|z|}{|a_n|} \geq (1-\sigma) \frac{r_1}{|a_{N_0-1}|} \geq 2.$$ Hence, as required, $$\left|1 + \frac{w}{a_n}\right| \leq \left|\frac{w}{a_n}\right| + 1 \leq \left|\frac{z}{a_n}\right| - 1\leq \left|1 + \frac{z}{a_n}\right|.$$   
On the other hand, suppose that $n\geq N_0$. It follows, by (\ref{genleq}) with $\zeta = \frac{z}{a_n}$, that $$\left|1 + \frac{w}{a_n}\right| \leq 1 + \left|\frac{w}{a_n}\right| = 1 + \sigma\left|\frac{z}{a_n}\right| \leq 1 + \Real\left(\frac{z}{a_n}\right) \leq \left|1 + \frac{z}{a_n}\right|.$$

This completes the proof of (\ref{claim1}), and (\ref{comparabletoM}) follows from (\ref{mueq}) and (\ref{claim1}).

Next we prove (\ref{logderivdunchangemuch}). Suppose that $r \geq r_1$, and that $z, w \in T(r)$. Now
\begin{equation*}
\left|z\frac{f'(z)}{f(z)}\right| = |z|\left| \frac{q}{z} + \sum_{n=1}^\infty \frac{1}{z + a_n} \right| \geq r \left(\Real\left(\frac{q}{z}\right) + \sum_{n=1}^\infty \Real\left(\frac{1}{z + a_n}\right)\right).
\end{equation*}
It follows from (\ref{genleq}), with $\zeta = z$, that $$\Real\left(\frac{1}{z}\right) = \frac{\Real (z)}{|z|^2} \geq \frac{\sigma}{|z|} \geq \frac{\sigma}{2r}.$$ Also, it follows from (\ref{aneweq}) that $$\Real\left(\frac{1}{z+a_n}\right) \geq \frac{\sigma^2}{|z|+\Real (a_n)}\geq \frac{\sigma^2}{2r + \Real (a_n)}, \qfor n\isnatural.$$ We deduce that 
\begin{equation}
\label{f1}
\left|z\frac{f'(z)}{f(z)}\right| \geq r \left(\frac{q\sigma}{2r} + \sigma^2 \sum_{n=1}^\infty \frac{1}{2r + \Real (a_n)}\right).
\end{equation}
We also have, by (\ref{genleq}) with $\zeta = 1/(w + a_n)$, that
\begin{equation*}
\left|w\frac{f'(w)}{f(w)}\right| \leq q + |w| \sum_{n=1}^\infty \frac{1}{|w + a_n|} \leq q + \frac{2r}{\sigma} \sum_{n=1}^\infty \Real \left(\frac{1}{w + a_n}\right).
\end{equation*}
Now, by (\ref{genleq}) with $\zeta = w$, $$\Real \left(\frac{1}{w+a_n}\right) \leq \frac{1}{ \Real(w+a_n)} \leq \frac{1}{r\sigma + \Real (a_n)} \leq \frac{2}{\sigma} \frac{1}{2r + \Real (a_n)}, \qfor n\isnatural.$$ It follows that 
\begin{equation}
\label{f2}
\frac{\sigma^4}{4}\left|w\frac{f'(w)}{f(w)}\right| \leq r \left(\frac{q\sigma^4}{4r} + \sigma^2\sum_{n=1}^\infty \frac{1}{2r + \Real (a_n)}\right).
\end{equation}
Equation (\ref{logderivdunchangemuch}) follows from (\ref{f1}) and (\ref{f2}).

Next we prove (\ref{logderivisnice}). Suppose that $r \geq r_1$. It follows from (\ref{aneweq}), with $z = r$, that
\begin{equation}
\label{e1}
\left|r\frac{f'(r)}{f(r)}\right| = \left|q + \sum_{n=1}^\infty \frac{r}{(r + a_n)}\right| \geq q + \sum_{n=1}^\infty \Real \left(\frac{r}{r+a_n}\right) \geq q + \sigma^2 \sum_{n=1}^\infty \frac{r}{r+|a_n|}.
\end{equation}
Suppose that $w$ is such that $|w| = r$ and $M(r,f) = |f(w)|$. Then 
\begin{equation}
\label{e2}
\frac{\log M(r,f)}{\log r} = \frac{\log|f(w)|}{\log r} \leq \frac{\log |c|}{\log r} + q + \frac{\sum_{n=1}^\infty \log \left(1 + r/|a_n|\right)}{\log r}.
\end{equation}
Let $N_1\isnatural$ be such that $|a_n| \geq 1$, for $n\geq N_1$. 
We claim that 
\begin{equation}
\label{claimeq}
\frac{r}{r+|a_n|} \geq \frac{1}{4}\frac{\log \left(1 + r/|a_n|\right)}{\log r}, \qfor n\geq N_1.
\end{equation}
To prove this claim, suppose that $n\geq N_1$. We consider first the case that $2|a_n| < r$. In this case
\begin{align*}
\frac{r\log r}{r+|a_n|} &> \frac{1}{2}\log r = \frac{1}{4} \log r^2 
                        \geq \frac{1}{4} \log \frac{r^2}{|a_n|^2} \geq \frac{1}{4} \log \left(1 + r/|a_n|\right).
\end{align*}
This completes the proof of the claim in this case. We consider next the case that $2|a_n| \geq r$. It is readily seen by differentiation that the function $$h(x) = (1+x)\log \left(1+\frac{1}{x}\right), \qfor x >0,$$ is decreasing. We deduce that $$\log r \geq \log r_1 \geq \frac{3}{2} \log 3 \geq (1 + x) \log \left(1 + \frac{1}{x}\right), \qfor x \geq \frac{1}{2}.$$ It follows that $$\log r \geq \left(1 + \frac{|a_n|}{r}\right) \log \left(1 + \frac{r}{|a_n|}\right).$$ This completes the proof of our claim (\ref{claimeq}). 

By (\ref{claimeq}), we deduce from (\ref{e1}) and (\ref{e2}) that
\begin{align}
r\left|\frac{f'(r)}{f(r)}\right| - \frac{\sigma^2}{8} \frac{\log M(r,f)}{\log r} &\geq  \frac{\sigma^2}{8} \left(\sum_{n=N_1}^\infty\frac{\log(1 + r/|a_n|)}{\log r} - \frac{\log|c|}{\log r} - \sum_{n=1}^{N_1-1} \frac{\log(1 + r/|a_n|)}{\log r}\right) \nonumber \\
\label{qwe}
&\geq \frac{\sigma^2}{8} \left(\sum_{n=N_1}^\infty\frac{\log(1 + r/|a_n|)}{\log r} - \frac{\log|c|}{\log r} - N_1 \frac{\log(1 + r/|a_1|)}{\log r}\right).
\end{align} 
The first term on the right-hand side of (\ref{qwe}) tends to infinity, as $r$ tends to infinity; however, the other terms are bounded. Hence, for a sufficiently large choice of $r_1$, this is sufficient to establish (\ref{logderivisnice}).

Finally (\ref{lastlogderiv}) follows from (\ref{logderivdunchangemuch}) with $w = r$, and from (\ref{logderivisnice}).
%
%
%
%
%%%%%%%%%%%%%%%%
%
%%%%%%%%%%%%%%%%
%
%
%
%
\section{Proof of Lemma~\ref{domainslemma}}
\label{Sdomains}
We use the following version of the Ahlfors five islands theorem; see, for example, \cite[Theorem 6.2]{MR0164038}.
\begin{lemma}
\label{constlemma}
Suppose that $D_\kappa$, $\kappa\in \{1, 2, 3\}$, are Jordan domains with pairwise disjoint closures. Then there exists $\nu > 0$ with the following property. If $r > 0$ and $f : B(a,r) \to \mathbb{C}$ is an analytic function such that $$\frac{|f'(a)|}{1 + |f(a)|^2} \geq \frac{\nu}{r},$$ then there exists $\kappa\in\{1, 2, 3\}$ such that $B(a,r)$ has a subdomain which is mapped bijectively onto $D_\kappa$.
\end{lemma}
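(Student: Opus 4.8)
The plan is to deduce this local statement from the Ahlfors five islands theorem in its normal-families formulation, combined with a rescaling argument and Marty's criterion. The global input is the following: since $D_1, D_2, D_3$ are Jordan domains with pairwise disjoint closures, and since a holomorphic function omits the value $\infty$ (so that three such domains suffice in place of the usual five), the Ahlfors five islands theorem — in the form given in \cite[Theorem 6.2]{MR0164038} — asserts that the family
\[
\mathcal{F} = \{ g : g \text{ holomorphic on } B(0,1),\ \text{no subdomain of } B(0,1) \text{ is mapped bijectively by } g \text{ onto any } D_\kappa \}
\]
is a normal family with respect to the spherical metric. (If one prefers to invoke only the plane version of the theorem, normality of $\mathcal{F}$ follows from it by Zalcman's rescaling lemma; but the cited reference already supplies the form we need.)

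From normality I would extract a uniform bound on the spherical derivative at the centre. Writing $g^\#(0) = |g'(0)|/(1+|g(0)|^2)$, I claim $\nu_0 := \sup_{g\in\mathcal{F}} g^\#(0) < \infty$. Indeed, if not, pick $g_n\in\mathcal{F}$ with $g_n^\#(0)\to\infty$; by normality a subsequence converges locally uniformly in the spherical metric to a limit that is either holomorphic in a neighbourhood of $0$ or identically $\infty$, and in both cases the spherical derivatives at $0$ converge to a finite number, a contradiction. Crucially $\nu_0$ depends only on $D_1, D_2, D_3$. Set $\nu = \nu_0 + 1$. Now suppose $r>0$ and $f:B(a,r)\to\mathbb{C}$ is analytic with $|f'(a)|/(1+|f(a)|^2) \geq \nu/r$, and define $g(\zeta) = f(a+r\zeta)$ for $\zeta\in B(0,1)$. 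Then $g'(0) = r f'(a)$ and $g(0) = f(a)$, so $g^\#(0) = r\,|f'(a)|/(1+|f(a)|^2) \geq \nu > \nu_0$, whence $g\notin\mathcal{F}$. Thus some subdomain $V\subset B(0,1)$ is mapped bijectively by $g$ onto some $D_\kappa$, and then $a+rV$ is a subdomain of $B(a,r)$ mapped bijectively by $f$ onto $D_\kappa$, which is the conclusion.

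The substantive content — and the only real obstacle — is the normality of $\mathcal{F}$, which is exactly the Ahlfors five islands theorem and rests on Ahlfors's theory of covering surfaces; this is precisely what we quote from \cite{MR0164038}, so I would not reprove it. Everything after that, namely the compactness extraction of $\nu_0$ and the affine rescaling $\zeta\mapsto a+r\zeta$, is routine.
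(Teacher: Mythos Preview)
Your argument is correct. The paper, however, does not prove this lemma at all: it simply quotes it as a known version of the Ahlfors five islands theorem, citing \cite[Theorem~6.2]{MR0164038} (Hayman, \emph{Meromorphic Functions}), where the quantitative statement in terms of the spherical derivative is already available.

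What you have supplied is the standard reduction from the normal-families formulation of Ahlfors's theorem to the pointwise spherical-derivative statement: normality of the ``no-island'' family $\mathcal{F}$ on $B(0,1)$ gives, via Marty's theorem (or the direct compactness argument you wrote out), a finite bound $\nu_0$ on $g^\#(0)$ for $g\in\mathcal{F}$, and the affine rescaling $\zeta\mapsto a+r\zeta$ transports this to the disc $B(a,r)$. This is exactly how one passes between the two formulations, and your observation that three domains suffice for holomorphic (as opposed to meromorphic) functions because $\infty$ is automatically omitted is the right reason. So your proposal is a genuine proof, whereas the paper treats the lemma as a black box from the literature; there is no discrepancy in approach to discuss beyond that.
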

%
%\begin{proof}[Proof of Lemma~\ref{domainslemma}]
To prove Lemma~\ref{domainslemma} we first fix a value of $\nu>0$ such that the conclusion of Lemma~\ref{constlemma} is satisfied for the domains defined in (\ref{thePk}). 

Since, by (\ref{logderivisnice}), $f'(r) \ne 0$, for $r \geq r_1$, we may define a function 
\begin{equation}
\label{trdef}
t(r) = \frac{8\nu}{\sigma^4} \left|\frac{f(r)}{f'(r)}\right|, \qfor r\geq r_1.
\end{equation}
We observe by (\ref{logderivisnice}) that
\begin{equation}
\label{teq}
t(r) \leq \frac{64\nu \log r}{\sigma^6 \log M(r,f)}r, \qfor r \geq r_1.
\end{equation}
It follows from (\ref{itgrows}) and (\ref{teq}) that we may assume that $t(r)$ is small compared to $r$, provided that $r$ is sufficiently large. We deduce that there exist $r_2 \geq r_1$ and $c_1 > 0$ such that if $r\geq r_2$, then $T(r)$ contains at least 
\begin{equation}
\label{mrdef}
m(r) = \frac{c_1 r^2}{t(r)^2}
\end{equation}
disjoint discs of radius $3t(r)$. We may assume that $m(r)$ is large, for $r \geq r_2$.

Suppose that $r \geq r_2$. We choose a sequence of points $(\beta_{q,r})$, for $1\leq q \leq m(r)$, such that the discs $B(\beta_{q,r}, 3t(r))$ are pairwise disjoint, and such that $$B(\beta_{q,r}, 3t(r)) \subset T(r), \qfor 1 \leq q \leq m(r).$$

Choose one of the points $(\beta_{q,r})$, which we denote by $\beta$. We also write $t$ for $t(r)$. We construct a point $b$, a domain $U$, and a compact set $V$ such that, with $b_{q,r} = b$, $U_{q,r} = U$ and $V_{q,r} = V$, the conclusions of the lemma are satisfied.

First we note that $f(z)\ne 0$, for $z \in B(\beta, 3t)$, by (\ref{comparabletoM}), and so we may define a branch of $\log f$ in this disc.

Let $h_\beta : B(\beta, t) \to \mathbb{C}$ be defined by $h_\beta(z) = \log f(z) - \log f(\beta)$. We have $h_\beta(\beta) = 0$. Hence, by (\ref{logderivdunchangemuch}) with $z = \beta$ and $w = r$, by (\ref{trdef}), and since $|\beta| \leq 2r$, $$\frac{|h_\beta'(\beta)|}{1 + |h_\beta(\beta)|^2} = |h_\beta'(\beta)| = \left|\frac{f'(\beta)}{f(\beta)}\right| \geq \frac{\sigma^4}{8} \left| \frac{f'(r)}{f(r)}\right| = \frac{\nu}{t}.$$

Hence, by Lemma~\ref{constlemma}, there is a subdomain of $B(\beta, t)$ which is mapped bijectively by $h_\beta$ onto $P_\kappa$, for some $\kappa\in\{1,2,3\}$, where $P_\kappa$ is as defined in (\ref{thePk}). In particular, there exists $b~\in~B(\beta, t)$ such that $f(b) > 0$. 

We next let $h_{b} : B(b, t) \to \mathbb{C}$ be defined by $h_{b}(z) = \log f(z) - \log f (b)$. We have $h_{b}(b)$ = 0 and so, as above, $$\frac{|h_{b}'(b)|}{1 + |h_{b}(b)|^2} \geq \frac{\nu}{t}.$$

Applying Lemma~\ref{constlemma} a second time, there is a subdomain $U$ of $B(b, t)$ which is mapped bijectively by $h_{b}$ onto $P_{\kappa}$, for some $\kappa\in\{1,2,3\}$. It follows that $\log f$ maps $U$ bijectively onto $\Omega_{\kappa}(b)$. Let $V$ be the subset of $U$ mapped bijectively by $\log f$ onto $Q_{\kappa}(b)$. It follows that $V$ is mapped bijectively by $f$ onto $T(f(b))$. This establishes part (iv) of the lemma.

We note that parts (i) and (ii) of the lemma are immediate, and the set inclusions in part (iii) of the lemma hold by construction.

Finally, we need to estimate the area of $V$. Since $Q_{\kappa}(b) = \log f(V)$, we deduce by (\ref{logderivdunchangemuch}) with $z$ replaced by $r$, by (\ref{trdef}), and since $|w| \geq r$, that 
\begin{align*}
\operatorname{area}(Q_{\kappa}(b)) = 2(\psi - \theta_2)\log 2 &\leq %\sup_{w \in V} |h_{b}'(w)|^2 \operatorname{area} V \\
                       \sup_{w \in V} \left|\frac{f'(w)}{f(w)}\right|^2 \operatorname{area}(V) \\
                       &\leq \left(\frac{4}{\sigma^4} \left|\frac{f'(r)}{f(r)} \right| \right)^2 \operatorname{area}(V) \\
                       &=\frac{1024\nu^2}{\sigma^{16} t^2} \operatorname{area}(V). 
\end{align*}                                  
Hence $\operatorname{area}(V) \geq c_2 t^2$, where $$c_2 = \frac{(\psi - \theta_2)\sigma^{16}\log 2}{512\nu^2}.$$
%
%%%%%%%%%%%%%%%
%
%%%%%%%%%%%%%%%
%
\section{Proof of Lemma~\ref{Elemma}}
\label{buildsets}
In this section we construct the sequence of finite collections of sets $(\mathcal{E}_n)_{n\isnatural}$ to which Lemma~\ref{mcmlemma} is applied. Our construction is very similar to that of Bergweiler and Karpi{\'n}ska in \cite{MR2609307}. The main difference, apart from minor changes in notation, is that their construction is within a whole annulus whereas ours is within a closed sector of an annulus $T(r)$, for $r \geq r_2$; this makes some parts of our construction slightly easier.

We require the following \cite[Lemma 4.1]{MR2609307}.
\begin{lemma}
\label{distlemma}
Suppose that $\Omega$ is a domain, and that $Q \subset \Omega$ is compact. Then there exists $C=C(\Omega, Q)>0$ such that if $g$ is analytic and univalent in $\Omega$, then $$|g'(w)| \leq C|g'(z)|, \qfor w, z \in Q.$$
\end{lemma}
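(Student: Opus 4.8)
The plan is to deduce the estimate from the Koebe distortion theorem together with a compactness (chaining) argument.

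\textbf{Reductions and the local step.} First dispose of the trivial case $\Omega = \mathbb{C}$: an analytic univalent function on $\mathbb{C}$ is affine, so $g'$ is constant and $C = 1$ works; hence assume $\Omega \ne \mathbb{C}$, so that $\operatorname{dist}(Q, \mathbb{C}\setminus\Omega) > 0$. Recall that, being injective and analytic, $g$ has $g' \ne 0$ throughout $\Omega$. If $a\in\Omega$ and $\overline{B(a,2s)}\subset\Omega$, applying the Koebe distortion theorem to the normalised univalent function $\zeta \mapsto (g(a + 2s\zeta) - g(a))/(2s\,g'(a))$ on the unit disc and restricting to $|\zeta|\le \frac12$ gives absolute bounds $\frac{4}{27} \le |g'(u)/g'(a)| \le 12$ for all $u\in\overline{B(a,s)}$. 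Thus there is an absolute constant $K$ (one may take $K = 81$) with
\begin{equation*}
|g'(u)| \le K\,|g'(v)| \qfor u,v \in \overline{B(a,s)},
\end{equation*}
whenever $\overline{B(a,2s)}\subset\Omega$; crucially $K$ is independent of $g$, $a$ and $s$.

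\textbf{Passing from local to global.} The remaining, and main, difficulty is to compare arbitrary points of $Q$ using a number of intermediate discs bounded in terms of $\Omega$ and $Q$ only. Since $Q$ need not be connected, first enlarge it: using that $\Omega$ is open and connected (hence polygonally connected) and that $Q$ is compact, one produces a \emph{connected} compact set $\tilde Q$ with $Q\subset \tilde Q\subset\Omega$ --- for instance the union of $\Gamma$ and finitely many closed discs, where $\Gamma$ is a finite union of polygonal arcs in $\Omega$ joining a fixed basepoint to the points of a finite net of $Q$. It suffices to prove the lemma for $\tilde Q$ in place of $Q$. Fix $r > 0$ with $2r < \operatorname{dist}(\tilde Q, \mathbb{C}\setminus\Omega)$. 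The open set $\bigcup_{x\in\tilde Q}B(x,r)$ is the $r$-neighbourhood of a connected set, hence connected; extract a finite subcover $B(a_1,r),\dots,B(a_N,r)$ with each $a_j\in\tilde Q$. Form the graph on $\{1,\dots,N\}$ with an edge $ij$ exactly when $B(a_i,r)\cap B(a_j,r)\ne\emptyset$; connectedness of the union of the balls forces this graph to be connected, so any two vertices are joined by a path of length at most $\mathcal{N} := N-1$.

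\textbf{Conclusion.} Given $w,z\in\tilde Q$, choose $i_0,i_k$ with $w\in B(a_{i_0},r)$, $z\in B(a_{i_k},r)$ and a graph-path $i_0,i_1,\dots,i_k$ with $k\le\mathcal{N}$; for each consecutive pair pick $y_l\in B(a_{i_l},r)\cap B(a_{i_{l+1}},r)$. Since $\overline{B(a_j,2r)}\subset\Omega$ for every $j$, the local step applies on each $B(a_{i_l},r)$ to the successive pairs $w,y_0$; $y_0,y_1$; $\dots$; $y_{k-1},z$, yielding $|g'(w)| \le K^{k+1}|g'(z)| \le K^{\mathcal{N}+1}|g'(z)|$. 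Hence $C = K^{\mathcal{N}+1}$, which depends only on $\Omega$ and $Q$ (through $\mathcal{N}$), has the required property. I expect the only genuinely delicate point to be the construction of the connected enlargement $\tilde Q$ and the verification that the cover and its nerve graph behave as claimed; the distortion input itself is entirely standard.
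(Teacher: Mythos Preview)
Your argument is correct. The local distortion step via Koebe is exactly right (with $K=81$ as you compute), the enlargement of $Q$ to a connected compact $\tilde Q\subset\Omega$ is carried out properly, and the nerve-graph chaining gives a bound $C=K^{\mathcal N+1}$ depending only on $(\Omega,Q)$, as required.

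As for the comparison: the paper does not actually prove this lemma at all---it simply quotes it as \cite[Lemma 4.1]{MR2609307} (Bergweiler--Karpi\'nska) and uses it as a black box. So there is no ``paper's own proof'' to set yours against. That said, your route is the standard one and is essentially how the cited result is established: Koebe distortion on small discs plus a finite chain across a connected compact enlargement of $Q$. The only cosmetic point is that one could avoid constructing $\tilde Q$ explicitly by instead invoking the hyperbolic metric on $\Omega$ (the ratio $|g'(w)/g'(z)|$ is controlled by $\exp(2\,d_\Omega(w,z))$, and $d_\Omega$ is bounded on $Q\times Q$ by compactness), but this is just a repackaging of the same idea and buys nothing extra here.
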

In particular, recalling the definitions given in (\ref{theOmegak}) and (\ref{theQk}), we deduce the following.
\begin{lemma}
\label{distlemma2}
There exists $C>0$ such that if  $\kappa\in\{1,2,3\}$, $a$ is such that $f(a) > 0$ and $g$ is analytic and univalent in $\Omega_\kappa(a)$, then $$|g'(w)| \leq C|g'(z)|, \qfor w,z \in Q_\kappa(a).$$
\end{lemma}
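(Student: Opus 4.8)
The plan is to obtain Lemma~\ref{distlemma2} directly from Lemma~\ref{distlemma} by removing the apparent dependence on $a$. The key observation is that although the domains $\Omega_\kappa(a)$ and the compacta $Q_\kappa(a)$ move with $a$ — specifically, they are translated by $\log f(a)$ in the real direction — they all have exactly the same shape. So first I would introduce, for $\kappa\in\{1,2,3\}$, the fixed domain $\Omega_\kappa(0)=\{z:|\Real(z)|<1,\ |\Imag(z)-8\pi\kappa|<3\pi\}$ and the fixed compact set $Q_\kappa(0)=\{z:0\leq\Real(z)\leq\log 2,\ |\Imag(z)-8\pi\kappa|\leq\psi-\theta_2\}$, noting that $Q_\kappa(0)\subset\Omega_\kappa(0)$. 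Applying Lemma~\ref{distlemma} with $\Omega=\Omega_\kappa(0)$ and $Q=Q_\kappa(0)$ yields, for each $\kappa\in\{1,2,3\}$, a constant $C_\kappa=C(\Omega_\kappa(0),Q_\kappa(0))>0$ such that any function analytic and univalent in $\Omega_\kappa(0)$ has derivative varying by at most a factor $C_\kappa$ on $Q_\kappa(0)$.

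Next I would set $C=\max\{C_1,C_2,C_3\}$ and verify the claim by a change of variables. Fix $\kappa\in\{1,2,3\}$ and $a$ with $f(a)>0$, and let $g$ be analytic and univalent in $\Omega_\kappa(a)$. Writing $\tau(z)=z+\log f(a)$, the translation $\tau$ maps $\Omega_\kappa(0)$ bijectively onto $\Omega_\kappa(a)$ and maps $Q_\kappa(0)$ bijectively onto $Q_\kappa(a)$, since the two families of sets differ precisely by this translation in the real coordinate. Hence $G:=g\circ\tau$ is analytic and univalent in $\Omega_\kappa(0)$, and $G'(z)=g'(\tau(z))$. Applying the conclusion of Lemma~\ref{distlemma} to $G$ on $Q_\kappa(0)$ gives $|G'(w_0)|\leq C_\kappa|G'(z_0)|$ for all $w_0,z_0\in Q_\kappa(0)$. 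Given $w,z\in Q_\kappa(a)$, put $w_0=\tau^{-1}(w)$, $z_0=\tau^{-1}(z)\in Q_\kappa(0)$; then $|g'(w)|=|G'(w_0)|\leq C_\kappa|G'(z_0)|=C_\kappa|g'(z)|\leq C|g'(z)|$, which is the desired inequality.

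There is essentially no obstacle here; the only point requiring a moment's care is the bookkeeping that the sets $\Omega_\kappa(a)$ and $Q_\kappa(a)$ are genuine translates of the fixed model sets — this is immediate from the defining inequalities (\ref{theOmegak}) and (\ref{theQk}), where $\log f(a)$ enters only through $\Real(z)-\log f(a)$ — together with the trivial fact that affine pre-composition multiplies the derivative by a unimodular constant and so does not affect the ratio $|g'(w)|/|g'(z)|$. Taking a single $C$ as the maximum over the three values of $\kappa$ then gives a constant independent of both $\kappa$ and $a$, as required.
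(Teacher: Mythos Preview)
Your proof is correct and follows exactly the same approach as the paper's own argument: the paper simply observes that the result follows from Lemma~\ref{distlemma} because the sets $\Omega_\kappa(a)$ and $Q_\kappa(a)$ are all translations of fixed sets. You have merely spelled out in full detail the translation argument (via $\tau(z)=z+\log f(a)$) and the step of taking $C=\max\{C_1,C_2,C_3\}$ that the paper leaves implicit.
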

\begin{proof}
This follows from Lemma~\ref{distlemma}, since the sets $\Omega_\kappa(a)$ and $Q_\kappa(a)$ are all translations of fixed sets. 
\end{proof}

We also, for convenience, use the following version of the Koebe distortion theorem; see, for example, \cite[Lemma 4.2]{MR2609307}
\begin{lemma}
\label{kdistlemma}
Suppose that $r>0$ and that $g$ is analytic and univalent in the disc $B(a, r)$. Then $$|g'(z)| \leq 12|g'(a)|, \qfor z \in \overline{B(a, r/2)}.$$
\end{lemma}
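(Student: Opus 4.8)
The plan is to reduce the assertion to the classical Koebe distortion theorem for normalised univalent functions on the unit disc. Since $g$ is analytic and univalent on $B(a,r)$ its derivative does not vanish there, so we may set
\[
G(w) = \frac{g(a+rw)-g(a)}{r\,g'(a)}, \qfor w\in B(0,1).
\]
Then $G$ is analytic and univalent on $B(0,1)$ with $G(0)=0$ and $G'(0)=1$, and for $z=a+rw$ one has $|g'(z)|/|g'(a)| = |G'(w)|$. As $z\in\overline{B(a,r/2)}$ corresponds to $|w|\le 1/2$, it suffices to prove that $|G'(w)|\le 12$ whenever $|w|\le 1/2$ and $G$ is of this normalised form.

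The key step is the distortion estimate $|G'(w)|\le (1+|w|)/(1-|w|)^{3}$ on $B(0,1)$, which I would derive from the Bieberbach coefficient bound: if $H$ is analytic and univalent on $B(0,1)$ with $H(0)=0$ and $H'(0)=1$, then the coefficient of $w^{2}$ in the Taylor expansion of $H$ has modulus at most $2$ (a consequence of the area theorem applied to a square-root transform of $H$). Applying this bound to the Koebe transform
\[
G_w(\zeta) = \frac{G\!\left(\dfrac{\zeta+w}{1+\bar w\,\zeta}\right)-G(w)}{(1-|w|^{2})\,G'(w)},
\]
which is again analytic, univalent on $B(0,1)$ and normalised, and reading off its $w^{2}$-coefficient, yields
\[
\left|\,\frac{w\,G''(w)}{G'(w)} - \frac{2|w|^{2}}{1-|w|^{2}}\,\right| \le \frac{4|w|}{1-|w|^{2}}, \qfor w\in B(0,1).
\]

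From here the estimate follows by integrating along radii: writing $w=\rho e^{i\theta}$ and using $\frac{\partial}{\partial\rho}\log|G'(\rho e^{i\theta})| = \frac{1}{\rho}\,\Real\!\left(\frac{\rho e^{i\theta}G''(\rho e^{i\theta})}{G'(\rho e^{i\theta})}\right)$, the displayed inequality bounds the right-hand side above by $\frac{3}{1-\rho}+\frac{1}{1+\rho}$; integrating from $0$ to $|w|$ and exponentiating, and noting $|G'(0)|=1$, gives $|G'(w)|\le (1+|w|)/(1-|w|)^{3}$. Since $(1+t)/(1-t)^{3}$ is increasing on $[0,1)$, on $|w|\le 1/2$ its maximum is attained at $|w|=1/2$, where it equals $(3/2)/(1/2)^{3}=12$. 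Translating back through the change of variable gives $|g'(z)|\le 12|g'(a)|$ for $z\in\overline{B(a,r/2)}$.

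The only genuinely non-trivial ingredient is the Bieberbach bound $|a_{2}|\le 2$ (equivalently, the area theorem), together with the routine but slightly tedious bookkeeping that the Koebe transform $G_w$ is again univalent and normalised and that its second coefficient has the stated form; the rest is elementary. In the paper itself one would, as indicated there, simply cite the distortion theorem rather than reprove it.
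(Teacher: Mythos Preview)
Your argument is correct: the reduction to a normalised univalent $G$ on the unit disc, the Bieberbach--Koebe derivation of $|G'(w)|\le(1+|w|)(1-|w|)^{-3}$, and the evaluation at $|w|=1/2$ giving $12$ are all standard and accurate. The paper itself does not prove this lemma at all but simply cites it from \cite{MR2609307}, exactly as you anticipate in your final sentence, so there is no further comparison to make.
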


We require one other result. 
\begin{lemma}
\label{Lstrangecondition}
Suppose that $f$ is a {\tef}. Then there exists $R_0=R_0(f)>0$ such that
\begin{equation*}
\frac{\log\log M^n(R_0, f)}{n} \rightarrow\infty, \quad\text{as } n\rightarrow\infty.
\end{equation*}
\end{lemma}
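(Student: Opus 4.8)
The plan is to prove Lemma~\ref{Lstrangecondition} by combining two standard facts about the iterated maximum modulus function with the observation that $\log M(r,f)/\log r \to \infty$ (equation~(\ref{itgrows})). First I would fix $R_0>0$ large enough that $M(r,f)>r$ for all $r\ge R_0$, so that $M^n(R_0,f)$ is a strictly increasing, unbounded sequence; this uses only that $f$ is a {\tef}. Write $r_n = M^n(R_0,f)$, so $r_{n+1}=M(r_n,f)$ and $r_n\to\infty$.

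The key step is to iterate the inequality coming from (\ref{itgrows}). Given any constant $K>1$, there exists $\rho_K$ such that $\log M(r,f)\ge r^{\,?}$ is too strong; instead use the correct consequence of (\ref{itgrows}): for any fixed $K$ there is $\rho_K\ge R_0$ with $\log M(r,f)\ge K\log r$ for $r\ge\rho_K$, i.e.\ $\log r_{n+1}\ge K\log r_n$ once $r_n\ge\rho_K$. Since $r_n\to\infty$, for each $K$ there is $n_K$ with $\log r_{n+1}\ge K\log r_n$ for all $n\ge n_K$. Iterating from $n_K$ gives $\log r_n\ge K^{\,n-n_K}\log r_{n_K}$, hence $\log\log r_n \ge (n-n_K)\log K + \log\log r_{n_K}$ for $n\ge n_K$. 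Therefore $\liminf_{n\to\infty}\frac{\log\log M^n(R_0,f)}{n}\ge \log K$. Since $K>1$ was arbitrary, the $\liminf$ is $+\infty$, which is exactly the claimed divergence to infinity.

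I expect the only real subtlety to be bookkeeping: one must be careful that the threshold $\rho_K$ (hence $n_K$) depends on $K$, so the iterated bound $\log r_n\ge K^{n-n_K}\log r_{n_K}$ only kicks in for $n$ beyond an index depending on $K$; this is harmless for computing a $\liminf$ as $n\to\infty$ but should be stated cleanly. A minor point is ensuring $\log\log r_{n_K}$ makes sense, which holds since we may take $\rho_K>e$ and hence $\log r_{n_K}>1$. No hard analytic input is needed beyond (\ref{itgrows}); everything else is elementary iteration.

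Alternatively, and perhaps more slickly, one can invoke the well-known fact (from \cite{MR1684251} or \cite{Rippon01102012}) that for a {\tef} the sequence $M^n(R_0,f)$ grows faster than any iterated exponential, which immediately yields $\log\log M^n(R_0,f)/n\to\infty$; but I would prefer to give the short self-contained argument above so as not to rely on a black-box growth estimate.
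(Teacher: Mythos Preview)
Your argument is correct. The iteration of the inequality $\log r_{n+1}\ge K\log r_n$ once $r_n\ge\rho_K$, together with the fact that $K>1$ is arbitrary, cleanly gives $\liminf_{n\to\infty}\frac{\log\log M^n(R_0,f)}{n}=+\infty$, and the bookkeeping issues you flag (dependence of $n_K$ on $K$, and $\log r_{n_K}>0$) are handled correctly.

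The paper, however, does not argue this way: it simply quotes \cite[Equation~(6)]{MR1684251}, namely that $\frac{\log\log M(R,f^n)}{n}\to\infty$ for large $R$, and observes that the lemma follows immediately (using the elementary inequality $M^n(R,f)\ge M(R,f^n)$, which is implicit). So the paper's proof is a one-line citation of an external growth estimate, whereas you give a short self-contained derivation from (\ref{itgrows}) alone. Your closing paragraph in fact anticipates exactly this alternative; the paper takes precisely the ``black-box'' route you mention and forgo. What your approach buys is independence from \cite{MR1684251}; what the paper's approach buys is brevity. Both are entirely adequate for such an auxiliary lemma.
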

\begin{proof}
This result follows immediately from the fact noted in \cite[Equation (6)]{MR1684251} that, for sufficiently large values of $R$, $$\frac{\log\log M(R, f^n)}{n} \rightarrow\infty, \quad\text{as } n\rightarrow\infty.$$ 
\end{proof}
We now prove Lemma~\ref{Elemma}.
We first recall the various sets and points constructed in the proof of Lemma~\ref{domainslemma}, in particular, for $r \geq r_2$, the points $b_{q, r}$, domains $U_{q, r}$ and connected compact sets $V_{q, r}$, for $1 \leq q \leq m(r)$.

It is well-known (see, for example, \cite{Rippon01102012}) that if $k > 1$, then 
\begin{equation*}
\frac{M(kr,f)}{M(r,f)} \rightarrow\infty \text{ as } r\rightarrow\infty.
\end{equation*}
It follows that we may choose $R_1 \geq R_0$, where $R_0$ is as defined in the statement of Lemma~\ref{Lstrangecondition}, sufficiently large that
\begin{equation}
\label{R0def}
M(r,f) \geq \max\left\{2, \frac{1}{\sigma^2}\right\} M(\sigma r,f), \qfor r \geq R_1. 
\end{equation}
Choose $\rho_0>0$ sufficiently large that $\sigma^2\rho_0 \geq \max\{r_2, R_1\}$. We claim that
\begin{equation}
\label{rhobig}
\mu^k(\rho_0) \geq \max\left\{2, \frac{1}{\sigma^2}\right\} M^k(\sigma^2\rho_0) \geq 2 M^k(R_1), \qfor k\isnatural.
\end{equation}

The right-hand inequality of (\ref{rhobig}) is obvious. The left-hand inequality of (\ref{rhobig}) is obtained by induction. It is clearly true for $k=1$, and when $k=n+1$ we have
\begin{align*}
\mu^{n+1}(\rho_0) &\geq \mu\left(\frac{1}{\sigma^2} M^n(\sigma^2 \rho_0)\right) &\text{inductively by (\ref{rhobig})} \\
                  &\geq \max\left\{2, \frac{1}{\sigma^2}\right\} M^{n+1}(\sigma^2 \rho_0) &\text{by (\ref{mugrows}) and (\ref{R0def})}.
\end{align*}

Put $$\mathcal{E}_0 = \{ T(\rho_0) \} \quad\text{ and }\quad \mathcal{E}_1 = \{V_{q,\rho_0} : 1 \leq q \leq m(\rho_0) \}.$$

We claim that we may define collections of compact sets $(\mathcal{E}_n)_{n\in\{0,1,\ldots\}}$ such that if $F~\in~\mathcal{E}_{n+1}$, then the following hold.
\begin{itemize}
\item The function $f^{n+1} : F \to T(\rho_{n+1,F})$, for some $\rho_{n+1,F} > \rho_0$, is a bijection. 
\item If $G \in \mathcal{E}_{n}$ is such that $F \subset G$, then there exists $q~\in~\{ 1,\cdots,m(\rho_{n,G})\}$ such that $f^{n}(F) = V_{q,\rho_{n,G}} \subset T(\rho_{n,G})$.
\item The sequence $(\mathcal{E}_n)_{n\isnatural}$ has the properties (i) and (ii) discussed before the statement of Lemma~\ref{mcmlemma}.
\end{itemize}

We establish this claim inductively. First we note that if $n=0$, then the claim follows straightforwardly from Lemma~\ref{domainslemma}. Suppose then that $n\isnatural$ and that $G \in \mathcal{E}_n$. For simplicity we write $\rho$ for $\rho_{n,G}$. See Figure~\ref{fig.x} for a simple picture of the sets used in this proof. 
Since, by assumption, $f^{n} : G \to T(\rho)$ is a bijection, we may let $h$ be the branch of the inverse of $f^n$ which maps $T(\rho)$ to $G$. We then set $$\mathcal{E}_{n+1}(G) = \{ h(V_{q,\rho}) : 1 \leq q \leq m(\rho) \}.$$ Finally we set $$\mathcal{E}_{n+1} = \bigcup_{G \in \mathcal{E}_n} \mathcal{E}_{n+1}(G).$$

The claimed properties now follow by construction, and by Lemma~\ref{domainslemma}. Note that (\ref{Eisnice}) also follows from the construction.

\begin{figure}[ht]
	\centering
	\includegraphics[width=10cm,height=7cm]{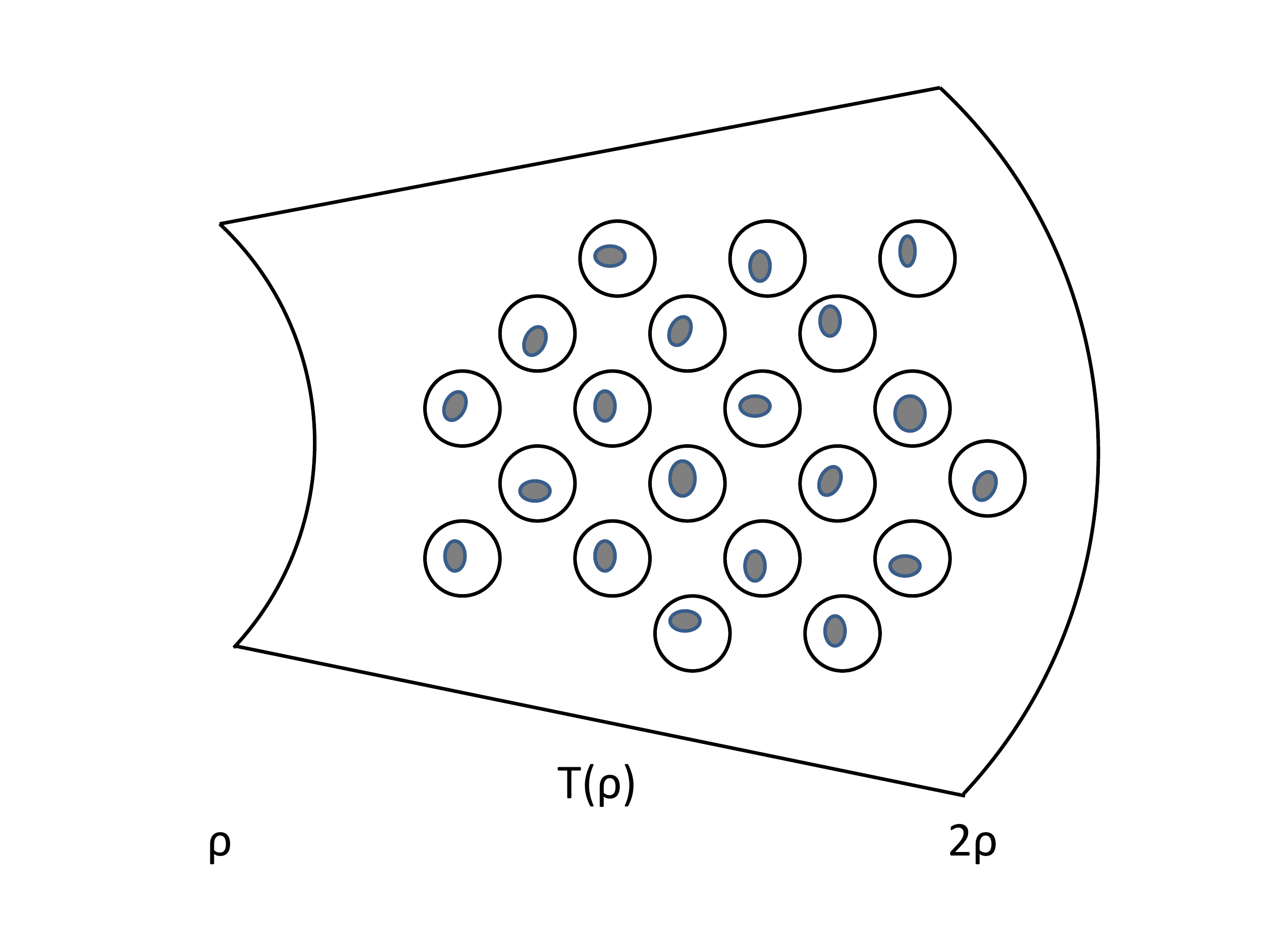}
	\caption{$f^{n}(G) = T(\rho)$, where $G \in \mathcal{E}_{n}$, containing $m(\rho)$ balls of radius $t(\rho)$, each of which contains one of the $V_{q, \rho}$, shown shaded. Note that, by construction, each ball is a distance of at least $t(\rho)$ from the boundary of $T(\rho)$. The preimages under $f^n$ of the $V_{q, \rho}$ make up the sets $\mathcal{E}_{n+1}(G)$.}
  \label{fig.x}
\end{figure}  

Fix $F \in \mathcal{E}_n$, for some $n\isnatural$, and set $\rho = \rho_{n, F}$ for simplicity. We need to establish (\ref{densityofsets}), which concerns densities of sets, and (\ref{diamofsets}) and (\ref{dneq}), which concern the diameters of sets.

First we prove (\ref{densityofsets}). We deduce from Lemma~\ref{domainslemma} that $T(\rho) = \exp W$, where $W = Q_\kappa(a)$ for some $\kappa\in\{1,2,3\}$ and some $a$ such that $f(a) = \rho$. Moreover, the function $\log f^n : F \to W$ is a bijection, and the inverse branch of $\log f^n$, $h : W \to F$, extends to $\Omega_\kappa(a)$.

Suppose that $1 \leq q \leq m(\rho)$. Let $W_q \subset W$ be such that $\exp W_q = V_{q, \rho}$. Since $|z| \leq 2\rho$, for $z \in V_{q, \rho}$, we deduce that  $$\operatorname{area}(W_q) = \int_{V_{q, \rho}} \frac{1}{|z|^2} \ dx \ dy \geq \frac{1}{4\rho^2} \operatorname{area}(V_{q, \rho}).$$ It follows by Lemma~\ref{domainslemma}, and by (\ref{mrdef}), that $$\operatorname{area}\left(\bigcup_{q=1}^{m(\rho)} W_q\right) \geq \frac{c_2 t(\rho)^2}{4\rho^2} m(\rho)  = \frac{c_1 c_2}{4}.$$ Hence, by Lemma~\ref{distlemma2}, 
\begin{align*}
\operatorname{dens}(E_{n+1}, F) &= \operatorname{dens}\left(\bigcup_{q=1}^{m(\rho)} h(W_q), h(W)\right) \\
                                &\geq \frac{1}{C^2} \operatorname{dens}\left(\bigcup_{q=1}^{m(\rho)} W_q, W\right) \\
                                &\geq \frac{c_1 c_2}{4C^2 \operatorname{area}(W)} \\
                                &= \frac{c_1 c_2}{8C^2 (\psi - \theta_2)\log 2},
\end{align*}                     
where $C$ is the constant in Lemma~\ref{distlemma2}. This completes the proof of (\ref{densityofsets}), with $$c_3 = \frac{c_1 c_2}{8C^2 (\psi - \theta_2)\log 2}.$$

Finally we prove (\ref{diamofsets}) and (\ref{dneq}). We may assume that $n\geq 2$. Suppose that $F_m \in \mathcal{E}_m$ is such that $F \subset F_m$, for $0 \leq m < n$. For simplicity we write $\rho_m$ for $\rho_{m, F_m}$. 

It follows from Lemma~\ref{domainslemma} that $f^{n-1}$ maps $F_{n-1}$ bijectively to $T(\rho_{n-1})$, and that 
\begin{equation*}
f^{n-1}(F) = V_{q, \rho_{n-1}} \subset B(b_{q},t(\rho_{n-1})) \subset B(b_{q},2t(\rho_{n-1})) \subset T(\rho_{n-1}),
\end{equation*}
for some $q \in \{ 1,\cdots,m(\rho_{n-1})\}$. 

We let $h$ be the branch of the inverse of $f^{n-1}$ which maps $f^{n-1}(F)$ to $F$, and note that $h$ may be extended to a function which is univalent in $B(b_{q},2t(\rho_{n-1}))$. Hence, by Lemma~\ref{kdistlemma}, we have that $$|h'(z)| \leq 12 |h'(b_q)|, \qfor z \in B(b_{q},t(\rho_{n-1})).$$ It follows that 
\begin{equation}
\label{Fdiam}
\operatorname{diam} F \leq 12|h'(b_q)| \operatorname{diam} f^{n-1}(F) \leq 24 |h'(b_q)| t(\rho_{n-1}).
\end{equation}

Now, let $\zeta = h(b_q)$. It is well-known that $$\frac{\log M(r, f)}{\log r}$$ is an increasing function of $r$. Since $f^m(\zeta) \in T(\rho_m)$, for $0 \leq m < n$, it follows from (\ref{lastlogderiv}) that $$|f'(f^m(\zeta))| \geq c_4 \frac{\log M(\rho_m,f)}{\log \rho_m} \frac{\rho_{m+1}}{\rho_m}, \qfor 0 \leq m < n-1,$$ 
where $c_4 = \sigma^6/64.$

It follows by the chain rule that 
\begin{equation}
\label{heq}
|h'(b_q)| \leq \frac{\rho_0}{\rho_{n-1}} \prod_{m=0}^{n-2} \frac{\log \rho_m}{c_4 \log M(\rho_m, f)}.
\end{equation}

Now, by (\ref{logderivisnice}) and (\ref{trdef}), 
\begin{equation}
\label{omeq}
\frac{\log M(\rho_{n-1},f)}{\log \rho_{n-1}} \leq \frac{8\rho_{n-1}}{\sigma^2}\left|\frac{f'(\rho_{n-1})}{f(\rho_{n-1})}\right| = \frac{64\nu\rho_{n-1}}{\sigma^6 t(\rho_{n-1})}.
\end{equation}

We deduce from (\ref{Fdiam}), (\ref{heq}) and (\ref{omeq}) that $$\operatorname{diam} F \leq \frac{24 \rho_0}{\rho_{n-1}} \prod_{m=0}^{n-2} \frac{\log \rho_m}{c_4 \log M(\rho_m, f)} t(\rho_{n-1}) \leq \frac{c_5}{c_4^{n-1}}\prod_{m=0}^{n-1} \frac{\log \rho_m}{\log M(\rho_m, f)},$$ where $c_5 = 1536\nu\rho_0/\sigma^6$.

We recall the definition of the function $\mu$ from (\ref{mudef}), and note that, by (\ref{comparabletoM}),
\begin{equation}
\label{rhogrow}
2\rho_m \geq |f^m(\zeta)| \geq \mu^m(\rho_0), \qfor 0 \leq m < n.
\end{equation}
It follows from (\ref{rhobig}) and (\ref{rhogrow}) that $\rho_m \geq M^m(R_1)$ for $0 \leq m < n$, where $R_1$ is as in (\ref{R0def}). We deduce that $$\frac{\log M(\rho_m, f)}{\log \rho_m} \geq \frac{\log M^{m+1}(R_1)}{\log M^m(R_1)}, \qfor 0 \leq m < n.$$
It follows that 
\begin{align*}
\operatorname{diam} F &\leq c_5 c_4^{1-n} \frac{\log R_1}{\log M^{n}(R_1)} = \exp\left(c_6 - n\log c_4 - \log \log M^n(R_1)\right),  
\end{align*}                     
where $c_6 = \log \ (c_4c_5\log R_1)$. 

We set $d_n = \exp\left(c_6 - n\log c_4 - \log \log M^n(R_1)\right)$, for $n\isnatural$. By Lemma~\ref{Lstrangecondition},
\begin{align*}
\lim_{n\rightarrow\infty} \frac{n}{|\log d_n|} &= \lim_{n\rightarrow\infty} \frac{n}{|n\log c_4 + \log\log M^n(R_1) - c_6|} = 0,
\end{align*}
and so (\ref{dneq}) is satisfied. It also follows that $d_n\rightarrow 0$ as $n\rightarrow\infty$. This completes the proof of the lemma.\\

%
% Ack
%
\itshape Acknowledgment: \normalfont
The author is grateful to Gwyneth Stallard and Phil Rippon for all their help with this paper.
%
%%%%%%%%%%%%%

%%%%%%%%%%%%%
%
% BIBLIOGRAPHY
%
%%%%%%%%%%%%%
\bibliographystyle{acm}
\bibliography{../Research.References}
\end{document}